\documentclass[11 pt]{article}
\addtolength{\oddsidemargin}{-.3in}
\addtolength{\evensidemargin}{-.3in}
\addtolength{\textwidth}{0.6in}
\addtolength{\topmargin}{-.3in}
\addtolength{\textheight}{0.6in}

\usepackage{graphicx}
\usepackage{amsmath}
\usepackage{amssymb}
\usepackage{amsthm}
\usepackage{enumerate}
\usepackage{color}
\usepackage{mathdots}
\usepackage{sectsty}
\usepackage[hidelinks]{hyperref}
\usepackage{tikz}
\usepackage{caption}
\usepackage{adjustbox}
\usepackage{fancyhdr}
\usepackage{verbatim}
\usepackage{hyperref}
\usepackage{url}

\sectionfont{\scshape\centering\fontsize{11}{14}\selectfont}
\subsectionfont{\scshape\fontsize{11}{14}\selectfont}

\newcommand\shorttitle{Partitions into Beatty sequences}
\newcommand\authors{\small Nian Hong Zhou}

\fancyhf{}

\fancyhead[C]{%
\ifodd\value{page}
  \small\scshape\authors
\else
  \small\scshape\shorttitle
\fi
}
\fancyfoot[C]{\thepage}

\pagestyle{fancy}
\hypersetup{
    colorlinks=true,       
    linkcolor=blue,          
    citecolor=cyan,        
}

\theoremstyle{plain}
\newtheorem{theorem}{Theorem}[section]
\newtheorem{lemma}[theorem]{Lemma}
\newtheorem{corollary}[theorem]{Corollary}
\newtheorem{proposition}[theorem]{Proposition}

\theoremstyle{remark}
\newtheorem{remark}{Remark}[section]

\makeatletter

\newcommand{\Rmnum}[1]{\expandafter\@slowromancap\romannumeral #1@}

\def\qb{\mathbb Q}
\def\rb{\mathbb R}
\def\nb{\mathbb N}
\def\zb{\mathbb Z}

\def\rrw{\rightarrow}

\numberwithin{equation}{section}



\title{Partitions into Beatty sequences} 

\author{Nian Hong \textsc{Zhou}} 

\date{\today} 

\begin{document}

\maketitle
\begin{abstract}
Let $\alpha>1$ be an irrational number. We establish asymptotic formulas for the number of partitions of $n$ into summands and distinct summands, chosen from the \emph{Beatty sequence} $(\lfloor\alpha m\rfloor)_{m\in\nb}$. This improves some results of Erd\"{o}s and Richmond established in 1977.
\end{abstract}
\section{Introduction and statement of results}
A partition of an integer $n$ is a sequence of non-increasing positive integers whose sum equals $n$. The study of the asymptotic behavior of various types of integer partition has a long history, see Hardy and Ramanujan \cite{HR1918}, Ingham \cite{MR5522}, Roth and Szekeres \cite{MR0067913}, Meinardus \cite{MR62781} and Richmond \cite{MR384731} for example. One of the most celebrated results is the asymptotic formula for $p(n)$, the number of unrestricted partitions of $n$. Hardy and Ramanujan \cite{HR1918} proved
\begin{align}\label{eqhr}
p(n)\sim \frac{1}{4\sqrt{3}n}e^{\pi\sqrt{2n/3}},
\end{align}
as integer $n\rrw+\infty$. In the same paper, they also established an asymptotic formula for the number $q(n)$ of partitions of $n$ with unequal parts. As $n\rrw \infty$
\begin{align*}
q(n)\sim \frac{1}{4\cdot 3^{1/4}\cdot n^{3/4}}e^{\pi\sqrt{n/3}}.
\end{align*}

\medskip

Let $\alpha>1$ be an irrational number and $\lfloor x\rfloor$ denotes the largest integer $\le x$. As a natural extension which has not previously appeared in the literature, Erd\"{o}s and Richmond \cite{MR535018} investigate the asymptotic behavior of $p_{\alpha}(n)$, and $q_{\alpha}(n)$, the number of partitions of $n$ into summands, and distinct summands, respectively, chosen
from the \emph{Beatty sequence} $(\lfloor\alpha m\rfloor)_{m\in\nb}$.  They gave asymptotic formulas with an error term of $p_{\alpha}(n)$ and $q_{\alpha}(n)$ for almost all $\alpha$ (in the Lebesgue sense).

\medskip

To introduce the main results of Erd\"{o}s and Richmond \cite{MR535018} conveniently, we introduce the definition of \emph{irrationality measure}. Let $\|x\|:=\min_{n\in\zb}|x-n|$ and $\alpha\in\rb$. Recall that $\mu\in\rb$ is an irrationality exponent of $\alpha$, if
$$0<q^{-1}\|q\alpha\|<  q^{-\mu},$$
has (at most) finitely many solutions $q\in\nb$. We denote by $\mu(\alpha)$ the infimum of such irrationality exponents $\mu$ and call it the irrationality measure of $\alpha$. If $\mu(\alpha)=\infty$ then we call $\alpha$ a \emph{Liouville number}. We note that the irrationality measure of an irrational number is always $\ge 2$.

\medskip

One of the critical issues in \cite{MR535018} is the convergence of the following Dirichlet series
\begin{equation}\label{seriesj}
J_{\alpha}(s):=\sum_{\ell\ge 1}\frac{\widetilde{B}_1(\alpha\ell)}{\ell^s},
\end{equation}
where $\widetilde{B}_1(x)=\{x\}-1/2$, and $\{x\}:=x-\lfloor x\rfloor$ is the fractional part of $x$. Hardy and Littlewood \cite[p. 248, (a)]{MR3069402} proved that if $\mu(\alpha)\in[2,\infty)$ then $J_{\alpha}(s)$ is convergent for $s>1-1/(\mu(\alpha)-1)$
\footnote{The condition stated in \cite[pp. 213, Equation (1.331)]{MR3069402} is
$$ n^{h}|\sin(\alpha n\pi)|\ge A>0$$
for all $n\in\nb$. This is equivalent to our definition for irrationality exponent when letting $\mu=1+h$.
}.
Thanks to this result, Erd\"{o}s and Richmond \cite{MR535018} established asymptotic formulas with error term of $p_{\alpha}(n)$ and $q_{\alpha}(n)$ when irrational number $\alpha>1$ has a finite irrationality measure $\mu(\alpha)$, see \cite[Theorem 2]{MR535018}\footnote{The condition stated in \cite{MR535018} is there exist $\lambda\in\rb$ such that
\begin{equation*}
 |\ell^{1+\lambda+\varepsilon}\sin(\alpha\ell\pi)|\rrw \infty,
\end{equation*}
holds for any $\varepsilon>0$, as integer $\ell\rrw\infty$. This is equivalent to our definition for irrationality exponent when letting $\mu=2+\lambda$.}' \footnote{Note that there exist serval typos in the statement of \cite[Theorem 2]{MR535018} as well as its proof. For the corrected leading asymptotic formulas of $p_{\alpha}(n)$ and $q_{\alpha}(n)$ with $\mu(\alpha)<\infty$, see Theorem \ref{mth} of this paper.}. However, for $\alpha>1$ being a Liouville number, that is $\mu(\alpha)=\infty$, they only can prove
$$\log q_{\alpha}(n)=\pi\sqrt{\frac{n}{3\alpha}}+O(n^{\varepsilon})$$
and
$$\log p_{\alpha}(n)=\pi\sqrt{\frac{2n}{3\alpha}}+O(n^{\varepsilon}),$$
for any $\varepsilon>0$.

\medskip

In this paper, we are interesting the asymptotic formulas of $p_{\alpha}(n)$ and $q_{\alpha}(n)$, when $\alpha>1$ being a Liouville number. The main result of this paper is the following Theorem \ref{mth}.

\begin{theorem}\label{mth}Let $\alpha>1$ be an irrational number. As $n\rrw\infty$
$$
q_{\alpha}(n)\sim \frac{\exp\left(\pi\sqrt{\frac{n}{3\alpha}}\right)}{2^{2-1/2\alpha}(3\alpha)^{1/4}n^{3/4}},
$$
and
\begin{equation}\label{eqm10}
p_{\alpha}(n)=\frac{
\exp\left(2\pi\sqrt{\frac{n}{6\alpha}}\right)}{n^{1-{1}/{4\alpha}+o(1)}}.
\end{equation}
Furthermore, if the series \eqref{seriesj} of $J_{\alpha}(1)$ is convergent, then
\begin{equation}\label{eqm}
p_{\alpha}(n)\sim \frac{\exp\left(2\pi\sqrt{\frac{n}{6\alpha}}\right)}{\Lambda_{\alpha} n^{1-1/4\alpha}},
\end{equation}
wherewith $\gamma$ denote the Euler-Mascheroni constant,
\begin{equation}\label{eqlm}
\Lambda_{\alpha}=4\sqrt{3}(\pi e^{-\gamma})^{\frac{1}{2\alpha}}\left(\frac{\alpha}{6}\right)^{1/4\alpha}\prod_{\ell\ge 1}\left(1-\frac{\{\alpha \ell\}}{\alpha \ell}\right)e^{\frac{1}{2\alpha \ell}}.
\end{equation}
\end{theorem}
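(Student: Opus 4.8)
The plan is to attack all three assertions through the generating functions
$$
\sum_{n\ge 0} q_\alpha(n)x^n = \prod_{m\ge 1}\bigl(1+x^{\lfloor\alpha m\rfloor}\bigr),
\qquad
\sum_{n\ge 0} p_\alpha(n)x^n = \prod_{m\ge 1}\bigl(1-x^{\lfloor\alpha m\rfloor}\bigr)^{-1},
$$
and then extract coefficients by a saddle-point (Meinardus-type) argument. Writing $x=e^{-\tau}$ with $\tau\to 0^+$, the first task is to obtain a precise asymptotic expansion of the logarithms
$$
F_q(\tau):=\sum_{m\ge 1}\log\bigl(1+e^{-\tau\lfloor\alpha m\rfloor}\bigr),
\qquad
F_p(\tau):=-\sum_{m\ge 1}\log\bigl(1-e^{-\tau\lfloor\alpha m\rfloor}\bigr).
$$
The key point is that $\lfloor\alpha m\rfloor=\alpha m-\{\alpha m\}=\alpha m-1/2-\widetilde B_1(\alpha m)$, so I would expand each summand in powers of the ``correction'' $\widetilde B_1(\alpha m)$ relative to the smooth main term $\alpha m-1/2$. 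Expanding $\log(1\pm e^{-\tau(\alpha m-1/2)}e^{\tau\widetilde B_1(\alpha m)})$ and summing, the leading term is the classical $\sum\log(1\pm e^{-\tau(\alpha m-1/2)})$, which by Mellin transform/Euler–Maclaurin gives $\frac{\pi^2}{12\alpha\tau}+c\log\tau+\cdots$ for $q_\alpha$ and $\frac{\pi^2}{6\alpha\tau}+\cdots$ for $p_\alpha$; the factor $1/\alpha$ is what produces $\pi\sqrt{n/(3\alpha)}$ and $2\pi\sqrt{n/(6\alpha)}$ after optimizing $\tau$.

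The subtler contribution is the first-order term in $\widetilde B_1$, which after interchanging sums becomes essentially
$$
\tau\sum_{m\ge1}\widetilde B_1(\alpha m)\,g(\tau\alpha m)
$$
for an explicit kernel $g$; by Mellin inversion this is controlled by the Dirichlet series $J_\alpha(s)$ of \eqref{seriesj} evaluated near $s=0$ and $s=1$. Here is where the dichotomy in the theorem enters. For the bound $q_\alpha(n)\sim\dots$ and the weak form \eqref{eqm10}, I only need that $J_\alpha(s)$ contributes at most a $\log$-order term: for $q_\alpha$ the alternating structure makes the dangerous pole disappear and one gets the clean constant $2^{-2+1/(2\alpha)}(3\alpha)^{1/4}$; for $p_\alpha$, even for a Liouville $\alpha$ one can bound the $\widetilde B_1$-sum by $o(1/\tau)\cdot$(something) — using only $|\widetilde B_1|\le 1/2$ and Weyl-type cancellation on long intervals — giving the $n^{-1+1/(4\alpha)+o(1)}$ shape without pinning the constant. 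For the sharp \eqref{eqm}, I would assume $J_\alpha(1)$ converges; then the residue at $s=1$ of the relevant Mellin integral is finite and contributes a genuine constant term $\sim\!-\tfrac{1}{2\alpha}\log\tau + C_\alpha$ to $F_p(\tau)$, where $C_\alpha$ is built from $\gamma$, $\pi$, and $\prod_{\ell\ge1}(1-\{\alpha\ell\}/(\alpha\ell))e^{1/(2\alpha\ell)}$ — exactly the product appearing in $\Lambda_\alpha$.

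Once the expansion $F_p(\tau)=\frac{\pi^2}{6\alpha\tau}-\bigl(1-\frac{1}{4\alpha}\bigr)\log\tau+\log K_\alpha+o(1)$ (and the analogue for $F_q$) is in hand, I would finish by a standard saddle-point/circle-method extraction: choose $\tau=\tau_n$ minimizing $F(\tau)+n\tau$, i.e. $\tau_n\sim\pi/\sqrt{6\alpha n}$, verify the minor-arc estimate $\re F(\tau+i\theta)-F(\tau)\le -c\,\theta^2/\tau^{3}$ for $|\theta|\le\tau^{1+\delta}$ (and a cruder bound elsewhere, using that $\widetilde B_1$ perturbs the smooth part by $o$ of the main term uniformly in $\theta$), and then invoke the Meinardus/Ingham Tauberian machinery to get
$$
p_\alpha(n)\sim \frac{K_\alpha}{\sqrt{2\pi F''(\tau_n)}}\,e^{F(\tau_n)+n\tau_n},
$$
which after simplification yields \eqref{eqm} with $\Lambda_\alpha$ as in \eqref{eqlm}; \eqref{eqm10} follows from the same computation with the constant term replaced by an $o(1)$-in-the-exponent bound, and the $q_\alpha$ statement is entirely parallel but easier. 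The main obstacle is the middle paragraph: squeezing enough out of $J_\alpha(s)$ for an arbitrary (possibly Liouville) $\alpha$ to get the $o(1)$ in \eqref{eqm10} uniformly — this requires a careful estimate of $\sum_{m\le M}\widetilde B_1(\alpha m)$ via continued-fraction convergents of $\alpha$, showing it is $o(M)$ with a rate that, while possibly terrible, still beats the $1/\tau$ main term after the Mellin integration; controlling the minor arcs for such $\alpha$ is the technical heart.
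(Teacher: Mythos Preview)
Your strategy matches the paper's: decompose the log-generating-function into a smooth part plus a $\widetilde{B}_1$-correction, use equidistribution alone for the unconditional claims, and use convergence of $J_\alpha(1)$ to pin down the constant in \eqref{eqm}. A few points where the paper's execution differs from your outline and is simpler. First, the saddle-point/minor-arc work is not redone: the paper quotes a result of Erd\H{o}s--Richmond (built on Roth--Szekeres) that already gives $p_\alpha(n)\sim e^{L_\alpha(y)+ny}/\sqrt{2\pi L_\alpha''(y)}$ and the analogous formula for $q_\alpha$, so the ``technical heart'' you flag is already in the literature and the whole task reduces to the asymptotics of $L_\alpha(t)$ and its first two derivatives. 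Second, the paper avoids Mellin inversion entirely --- for Liouville $\alpha$ there is no reason $J_\alpha(s)$ continues past $\re s=1$, so that route is risky; instead one writes, via a one-step Taylor expansion in $\{\alpha\ell\}$, $L_\alpha(t)=L_1(\alpha t)+\tfrac{t}{2}D(\alpha t)+R_\alpha(t)+E_\alpha(t)$ with $R_\alpha(t)=\sum_\ell t\widetilde{B}_1(\alpha\ell)/(e^{t\alpha\ell}-1)$, and then Abel summation with the Weyl bound $S_\alpha(x)=\sum_{\ell\le x}\widetilde{B}_1(\alpha\ell)=o(x)$ gives $R_\alpha'(t)=o(1/t)$, hence $R_\alpha(t)=o(|\log t|)$. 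Third, the sharp $q_\alpha$ asymptotic is not a Mellin pole cancellation but the identity $F_q(\tau)=L_\alpha(\tau)-L_\alpha(2\tau)$: the problematic piece becomes $R_\alpha(\tau)-R_\alpha(2\tau)=-\int_{\tau}^{2\tau} R_\alpha'(u)\,du=o(1)$ directly from $R_\alpha'=o(1/t)$. Finally, a small slip: the coefficient of $\log\tau$ in $F_p(\tau)$ is $\tfrac{1-\alpha^{-1}}{2}$, not $-(1-\tfrac{1}{4\alpha})$; the exponent $1-\tfrac{1}{4\alpha}$ only emerges after combining this with the $\tau^{-3/2}$ coming from $\sqrt{L_\alpha''(\tau)}$.
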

\begin{remark}Theorem \ref{mth} is new for $\alpha>1$ being a Liouville number. Surprisingly, for \emph{all irrational numbers $\alpha>1$}, we can find an asymptotic formula of type \eqref{eqhr} for $q_{\alpha}(n)$.
However, for $p_{\alpha}(n)$ if there is no additional assumption, such as the convergence of the series \eqref{seriesj} of $J_{\alpha}(1)$, we can only get \eqref{eqm10} at present.
\end{remark}

\medskip

By using a result of Ostrowski \cite{MR3069389} on Diophantine approximation, we give an effective condition for the validity of the asymptotic formula \eqref{eqm}.

\begin{corollary}\label{mcor}If there exists a function $\psi:\rb_+\rrw \rb_+$ of which is decreasing for all sufficiently large $x$ such that
\begin{equation*}
\int_{1}^{\infty}\frac{\psi(x)\,dx}{x}<+\infty,
\end{equation*}
as well as for all integer $q\ge 1$,
\begin{equation*}
q^{-1}\|q\alpha\|> e^{-q\psi(q)}.
\end{equation*}
Then the series \eqref{seriesj} of $J_{\alpha}(1)$ is convergent. Further, \eqref{eqm} in Theorem \ref{mth} holds.
\end{corollary}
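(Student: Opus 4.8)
The plan is to show that the Diophantine hypotheses force the series $J_{\alpha}(1)=\sum_{\ell\ge1}\widetilde{B}_1(\alpha\ell)/\ell$ to converge; granting this, the asymptotic \eqref{eqm} is immediate from the ``furthermore'' clause of Theorem \ref{mth}. Write $D(L):=\sum_{1\le\ell\le L}\widetilde{B}_1(\alpha\ell)$. By Weyl's equidistribution theorem $D(L)=o(L)$, and Abel summation gives, for $N>M$,
\[
\sum_{M<\ell\le N}\frac{\widetilde{B}_1(\alpha\ell)}{\ell}=\frac{D(N)-D(M)}{N}+\int_M^N\frac{D(t)-D(M)}{t^2}\,dt .
\]
It therefore suffices to prove $\int_1^{\infty}|D(t)|\,t^{-2}\,dt<\infty$: combined with $D(L)/L\to0$ this forces the right-hand side above to tend to $0$ (uniformly in $N$) as $M\to\infty$, so $J_{\alpha}(1)$ converges by the Cauchy criterion. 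Everything reduces to a sufficiently sharp estimate for the partial sums $D(t)$ of the sawtooth sequence $(\widetilde{B}_1(\alpha\ell))_{\ell\ge1}$.

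This is where a result of Ostrowski on Diophantine approximation enters. Let $[a_0;a_1,a_2,\dots]$ be the continued fraction expansion of $\alpha$ and $p_n/q_n$ its convergents; the denominators $q_n$ grow at least as fast as the Fibonacci numbers, and $\|q_n\alpha\|<q_{n+1}^{-1}$. Ostrowski's analysis of $D(N)$ via the Ostrowski numeration $N=\sum_k b_kq_k$ (with $0\le b_k\le a_{k+1}$) yields, for $q_K\le t<q_{K+1}$ and $b_K=\lfloor t/q_K\rfloor$, a bound of the shape
\[
|D(t)|\ \ll\ \frac{b_K(a_{K+1}-b_K)}{a_{K+1}}\ +\ \sum_{k<K}a_{k+1}.
\]
The decisive feature is the cancellation in the first term, which vanishes when $b_K\in\{0,a_{K+1}\}$. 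Integrating over the block $t\in[q_K,q_{K+1})$, where $t\asymp b_Kq_K$ and stepping $b_K$ by one costs a $t$-range of length $q_K$, the first term contributes $\ll q_K^{-1}\int_1^{a_{K+1}}(b^{-1}-a_{K+1}^{-1})\,db\ll q_K^{-1}\log a_{K+1}$, while the second contributes $\ll q_K^{-1}\sum_{k<K}a_{k+1}$. Summing over $K$ and interchanging the order of summation, the second contribution is $\ll\sum_k a_{k+1}\sum_{K>k}q_K^{-1}\ll\sum_k a_{k+1}q_{k+1}^{-1}\ll\sum_k q_k^{-1}<\infty$, using $q_{k+1}=a_{k+1}q_k+q_{k-1}$ and the (at least geometric) growth of the $q_k$. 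Hence
\[
\int_1^{\infty}\frac{|D(t)|}{t^2}\,dt\ \ll\ 1+\sum_{K\ge0}\frac{\log a_{K+1}}{q_K},
\]
and the problem narrows to the convergence of the last series.

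Now I would invoke the hypothesis at $q=q_K$: from $q_K^{-1}\|q_K\alpha\|>e^{-q_K\psi(q_K)}$ and $\|q_K\alpha\|<q_{K+1}^{-1}$ one obtains $q_Kq_{K+1}<e^{q_K\psi(q_K)}$, hence $\log a_{K+1}\le\log q_{K+1}<q_K\psi(q_K)$, so $q_K^{-1}\log a_{K+1}<\psi(q_K)$. Since $q_K\ge c\phi^{K}$ for some $c>0$, where $\phi=(1+\sqrt5)/2$, and $\psi$ is eventually decreasing, one has $\psi(q_K)\le\psi(c\phi^{K})$ for all large $K$; the substitution $x=c\phi^{u}$ gives $\int\psi(c\phi^{u})\,du=(\log\phi)^{-1}\int\psi(x)\,x^{-1}\,dx<\infty$, so $\sum_K\psi(c\phi^{K})<\infty$ by the integral test. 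Therefore $\sum_K q_K^{-1}\log a_{K+1}<\infty$, whence $\int_1^{\infty}|D(t)|\,t^{-2}\,dt<\infty$ and $J_{\alpha}(1)$ converges; the asymptotic \eqref{eqm} then follows from Theorem \ref{mth}.

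The hard part will be the partial-sum estimate for $D(t)$. A crude bound $|D(t)|\ll\sum_{k:\,q_k\le t}a_{k+1}$ only yields convergence of $J_{\alpha}(1)$ under $\sum_k q_{k+1}q_k^{-2}<\infty$, which is far more restrictive than the present hypothesis (which permits $q_{k+1}$ as large as $q_k^{-1}e^{q_k\psi(q_k)}$). One must keep the cancellation present in Ostrowski's formula, so that the top-scale term, averaged over each block $[q_K,q_{K+1})$ against $dt/t^2$, contributes only $O(q_K^{-1}\log a_{K+1})$; extracting this cleanly from Ostrowski's identity — in particular controlling the boundary digits $b_k$ and the coupling between consecutive scales — is the technical crux of the argument.
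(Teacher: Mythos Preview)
Your argument is correct and follows the same overall strategy as the paper: reduce the convergence of $J_{\alpha}(1)$ to control of the Weyl sums $D(x)=\sum_{\ell\le x}\widetilde{B}_1(\alpha\ell)$ via Ostrowski's Diophantine analysis, then feed the hypothesis $q^{-1}\|q\alpha\|>e^{-q\psi(q)}$ in through the continued-fraction denominators. The paper, however, packages the Ostrowski input differently: it asserts the \emph{pointwise} bound $D(x)\ll x\psi(c_0\log x)/\log x$ (citing Ostrowski's method for the proof) and then obtains convergence of $J_{\alpha}(1)$ by a single Riemann--Stieltjes integration by parts. You instead integrate $|D(t)|/t^2$ block by block over the intervals $[q_K,q_{K+1})$, keep the top-digit cancellation $b_K(a_{K+1}-b_K)/a_{K+1}$, and distill everything to the clean criterion $\sum_K q_K^{-1}\log a_{K+1}<\infty$, which you then verify from the hypothesis via $\log a_{K+1}<q_K\psi(q_K)$ and the integral test. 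Your route is more explicit and sidesteps the need for a uniform pointwise estimate; the paper's route is terser but leaves the Ostrowski step as a black box. Both are valid, and your observation that the crude bound $|D(t)|\ll\sum_{q_k\le t}a_{k+1}$ would only yield the much stronger condition $\sum_k q_{k+1}q_k^{-2}<\infty$ correctly identifies why the top-scale cancellation is essential.
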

\begin{remark}
It might be an interesting problem that whether the series \eqref{seriesj} of $J_{\alpha}(1)$ is convergent for all irrational numbers $\alpha$.
\end{remark}

\begin{remark}Taking $\psi(x)=x(\log x+1)^{-1-\delta},(\delta>0)$, then $\psi(x)$ satisfies the conditions of Corollary \ref{mcor}.
If $\alpha\in \rb\setminus \qb$ has a finite irrationality measure, that is there exist a $\lambda\ge 2$ such that
$$q^{-1}\|q\alpha\|\gg q^{-\lambda},$$
for all integer $q\ge 1$. Then, since $q^{-\lambda}\gg e^{-\psi(q)}$,
we have $\alpha>1$ satisfies the conditions of Corollary \ref{mcor}. Therefore, we can see from Wolfram Web Resources \cite{irm} that $\alpha$ can take $1+\log 2$, $\log 3$, $e$, $\pi$, $\pi^2$, $\zeta(3)$, all irrational algebraic numbers greater than $1$ and so on.

\end{remark}

We shall give some numerical data to support Theorem \ref{mth} and Corollary \ref{mcor}. Let us consider the cases of $\alpha=\sqrt{2}$. One can prove in Appendix \ref{A} that $$\Lambda_{\sqrt{2}}\in(5.7331, 5.7339).$$ Denoting by
$$\hat{q}_{\sqrt{2}}(n)= n^{-3/4}e^{\pi\sqrt{\frac{n}{3\sqrt{2}}}}$$
and
$$\hat{p}_{\sqrt{2}}(n)=n^{-1+1/4\sqrt{2}}e^{\pi\sqrt{\frac{2n}{3\sqrt{2}}}}.$$
Then, with the help of {\bf Mathematica}, the use of Theorem \ref{mth} gives
\begin{equation}\label{eq1}
\hat{q}_{\sqrt{2}}(n)\sim (4.493\cdots)q_{\sqrt{2}}(n),\;\;\mbox{and}\;\; \hat{p}_{\sqrt{2}}(n)\sim (5.773\cdots)p_{\sqrt{2}}(n).
\end{equation}
We illustrate some of our results in the following(All computations are done in {\bf Mathematica}).
\begin{center}
\captionof{table}{Numerical data for $q_{\sqrt{2}}(n)$.}
  \begin{tabular}{ c | c | c | c  }
\hline
    $n$ & $q_{\sqrt{2}}(n)$ & $\hat{q}_{\sqrt{2}}(n)$ & $\hat{q}_{\sqrt{2}}(n)/q_{\sqrt{2}}(n)$\\
\hline
   $50 $ & $552$ &$2568.04$ & $\sim 4.65225$ \\
   $100$ & $28870$ & $133026.0$ & $\sim 4.60774$ \\
   $200$ & $9582142$ & $4.38416\cdot 10^7$ & $\sim 4.57534$ \\
   $400$ & $43472367216$ & $1.97845\cdot 10^{11}$ & $\sim 4.55105$ \\
   $800$ & $7972258288121676$ & $3.61410\cdot 10^{16}$ & $\sim 4.53334$ \\
   $1600$ & $273790497114888860128182$ & $1.23780\cdot 10^{24}$ & $\sim 4.52097$ \\
\hline
  \end{tabular}
\end{center}

\begin{center}
\captionof{table}{Numerical data for $p_{\sqrt{2}}(n)$.}
  \begin{tabular}{ c | c | c | c  }
\hline
    $n$ & $p_{\sqrt{2}}(n)$ & $\hat{p}_{\sqrt{2}}(n)$ & $\hat{p}_{\sqrt{2}}(n)/p_{\sqrt{2}}(n)$\\
\hline
   $25 $ & $560$ &$3412.03$ & $\sim 6.09291$ \\
   $50 $ & $28086$ &$167998.0$ & $\sim 5.98154$ \\
   $100$ & $8892735$ & $5.26274\cdot10^7$ & $\sim 5.91802$ \\
   $200$ & $38427241214$ & $2.25740\cdot 10^{11}$ & $\sim 5.87448$ \\
   $400$ & $6706078262183805$ & $3.91959\cdot 10^{16}$ & $\sim 5.84483$ \\
   $800$ & $219091729965354807601257$ & $1.27599\cdot 10^{24}$ & $\sim 5.82401$ \\
\hline
  \end{tabular}
\end{center}
Comparing the above two tables with \eqref{eq1}, we see that the numerical data supports our main result Theorem \ref{mth} and Corollary \ref{mcor}.

\section{The proof of the  main results}
In the section, we prove Theorem \ref{mth} and Corollary \ref{mcor}.
Let $\alpha>1$ be an irrational number and $t>0$. Denoting by
$$L_{\alpha}(t)=-\sum_{\ell\ge 1}\log\left(1-e^{-t\lfloor \alpha\ell\rfloor}\right).$$
The following Proposition \ref{pro21} follows from Erd\"{o}s and Richmond \cite[Theorem 1]{MR535018}.
\begin{proposition}\label{pro21} As $n\rrw +\infty$
\begin{equation*}
q_{\alpha}(n)\sim \frac{\exp\left(L_{\alpha}(x)-L_{\alpha}(2x)+nx\right)}{\sqrt{2\pi (L_{\alpha}''(x)-4L_{\alpha}''(2x))}},
\end{equation*}
where $x\in\rb_+$ solves the equation: $
L_{\alpha}'(x)-2L_{\alpha}'(2x)+n=0.$\\
Furthermore, as $n\rrw +\infty$
\begin{equation*}
p_{\alpha}(n)\sim \frac{\exp\left(L_{\alpha}(y)+ny\right)}{\sqrt{2\pi L_{\alpha}''(y)}},
\end{equation*}
where $y\in\rb_+$ solves the equation: $
L_{\alpha}'(y)+n=0.$
\end{proposition}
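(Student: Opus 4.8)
The statement is the specialization to the sequence $a_{\ell}=\lfloor\alpha\ell\rfloor$ of the general saddle-point reduction of Erd\"{o}s and Richmond \cite[Theorem~1]{MR535018}, so the plan is to verify that their hypotheses apply and then quote their theorem; I will also indicate why the reduction holds. The starting point is that, with $z=e^{-\tau}$ and $\re\tau>0$,
\begin{equation*}
\sum_{n\ge 0}p_{\alpha}(n)z^{n}=\prod_{\ell\ge 1}\frac{1}{1-z^{\lfloor\alpha\ell\rfloor}}=e^{L_{\alpha}(\tau)},\qquad
\sum_{n\ge 0}q_{\alpha}(n)z^{n}=\prod_{\ell\ge 1}\bigl(1+z^{\lfloor\alpha\ell\rfloor}\bigr)=e^{L_{\alpha}(\tau)-L_{\alpha}(2\tau)},
\end{equation*}
where $L_{\alpha}$ is extended to the half-plane by $L_{\alpha}(\tau)=-\sum_{\ell\ge1}\log(1-e^{-\tau\lfloor\alpha\ell\rfloor})$ and the second identity uses $1+u=(1-u^{2})/(1-u)$. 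One first records the elementary facts that on $(0,\infty)$ both $L_{\alpha}$ and $t\mapsto L_{\alpha}(t)-L_{\alpha}(2t)$ are strictly decreasing and strictly convex, tend to $+\infty$ as $t\rrw0^{+}$ and to $0$ as $t\rrw+\infty$, and satisfy $L_{\alpha}^{(k)}(t)\asymp_{k}t^{-k-1}$ as $t\rrw0^{+}$ (obtained by comparison with the classical partition generating function, using $\#\{\ell:\lfloor\alpha\ell\rfloor\le x\}=x/\alpha+O(1)$). Hence the saddle-point equations $L_{\alpha}'(y)+n=0$ and $L_{\alpha}'(x)-2L_{\alpha}'(2x)+n=0$ have unique positive solutions, and $x,y\rrw0$ as $n\rrw\infty$.

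The mechanism is the circle/saddle-point method. By Cauchy's formula,
\begin{equation*}
p_{\alpha}(n)=\frac{1}{2\pi}\int_{-\pi}^{\pi}\exp\bigl(L_{\alpha}(y-i\theta)+(y-i\theta)n\bigr)\,d\theta,
\end{equation*}
and similarly $q_{\alpha}(n)=\frac{1}{2\pi}\int_{-\pi}^{\pi}\exp\bigl(L_{\alpha}(x-i\theta)-L_{\alpha}(2x-2i\theta)+(x-i\theta)n\bigr)\,d\theta$. I would split each integral at a cutoff $\theta_{0}=\theta_{0}(n)\rrw0$ chosen so that $L_{\alpha}''(y)\theta_{0}^{2}\rrw\infty$ while the cubic remainder $L_{\alpha}'''(y)\theta_{0}^{3}\rrw0$ (possible by $L_{\alpha}^{(k)}(y)\asymp y^{-k-1}$ and $y\rrw0$). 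On the major arc $|\theta|\le\theta_{0}$, the Taylor expansion of $\tau\mapsto L_{\alpha}(\tau)+\tau n$ about $\tau=y$ has vanishing linear term by the saddle-point equation, the quadratic term yields $\int_{\rb}e^{-\theta^{2}L_{\alpha}''(y)/2}\,d\theta=\sqrt{2\pi/L_{\alpha}''(y)}$, and the cubic-and-higher contribution is $o(1)$; this produces the main term $e^{L_{\alpha}(y)+yn}/\sqrt{2\pi L_{\alpha}''(y)}$, and the parallel computation with $L_{\alpha}(\tau)-L_{\alpha}(2\tau)$ about $\tau=x$ yields $e^{L_{\alpha}(x)-L_{\alpha}(2x)+xn}/\sqrt{2\pi(L_{\alpha}''(x)-4L_{\alpha}''(2x))}$, matching the asserted formulas.

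The crux, and the step I expect to be the main obstacle in a from-scratch argument, is the minor-arc estimate: one must show
\begin{equation*}
L_{\alpha}(y)-\re L_{\alpha}(y-i\theta)=\tfrac12\sum_{\ell\ge1}\log\!\left(1+\frac{2e^{-y\lfloor\alpha\ell\rfloor}\bigl(1-\cos(\lfloor\alpha\ell\rfloor\theta)\bigr)}{\bigl(1-e^{-y\lfloor\alpha\ell\rfloor}\bigr)^{2}}\right)\ge c(n)
\end{equation*}
uniformly for $\theta_{0}<|\theta|\le\pi$, with $c(n)$ large enough that $e^{-c(n)}$ overwhelms the polynomial factors $\theta_{0}^{-1}$ and $L_{\alpha}''(y)^{1/2}$ carried out of the major arc (and likewise for the distinct-parts integrand). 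Since $(\lfloor\alpha\ell\rfloor)_{\ell\ge1}$ runs through a set of integers of positive density $1/\alpha$, a Weyl-sum argument of Roth--Szekeres type shows that for every $\theta$ in the minor arc there are $\gg y^{-1}$ values of $\ell$ with $\lfloor\alpha\ell\rfloor\le y^{-1}$ and $\|\lfloor\alpha\ell\rfloor\theta/2\pi\|$ bounded away from $0$, each contributing $\gg1$ to the sum; near the inner edge of the minor arc one uses instead the lower bound $1-\cos(2\pi t)\gg\|t\|^{2}$ on the same range of $\ell$. Only the positive density of the Beatty set is used here, so the estimate is uniform over all irrational $\alpha>1$; this is precisely the content of \cite[Theorem~1]{MR535018}. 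With the minor arc disposed of, combining it with the major-arc computation completes the proof of Proposition \ref{pro21}.
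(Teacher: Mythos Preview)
Your approach matches the paper's: both simply invoke \cite[Theorem~1]{MR535018}, which packages the Roth--Szekeres saddle-point machinery for this sequence, and your expanded sketch of the major/minor-arc mechanism is accurate. One caveat: in the minor-arc step you assert that ``only the positive density of the Beatty set is used here,'' but positive density alone is not enough (the even integers have density $1/2$ yet the minor-arc bound fails at $\theta=\pi$); what is actually needed, and what the paper explicitly names, is the \emph{equidistribution} of $(\lfloor\alpha m\rfloor)_{m\ge1}$ in residue classes, which for irrational $\alpha$ follows from Weyl's theorem and is precisely the input Erd\H{o}s--Richmond feed into the Roth--Szekeres conditions.
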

We see from Erd\"{o}s and Richmond \cite{MR535018} that Proposition \ref{pro21} follows from the work of Roth and Szekeres \cite{MR0067913}, as well as the equidistribution properties of the sequence $(\lfloor\alpha m\rfloor)_{m\in\nb}$. Under above Proposition \ref{pro21}, Theorem \ref{mth} will follows from the following proposition.
\begin{proposition}\label{pro22}Let $t\rrw 0^+$.
For any irrational $\alpha>1$ we have
$$L_{\alpha}(t)=\frac{\pi^2}{6\alpha t}+\frac{1-\alpha^{-1}}{2}\log t+ o(|\log t|),$$
$$L_{\alpha}(t)-L_{\alpha}(2t)=\frac{\pi^2}{12\alpha t}-\frac{1-\alpha^{-1}}{2}\log 2 + o(1),$$
$$L_{\alpha}'(t)=-\frac{\pi^2}{6\alpha t^2}+\frac{1-\alpha^{-1}}{2t} + o\left(\frac{1}{t}\right),$$
and
$$L_{\alpha}''(t)=\frac{\pi^2}{3\alpha t^3}+ O\left(\frac{1}{t^2}\right).$$
Furthermore, if the series \eqref{seriesj} of $J_{\alpha}(1)$ is convergent then
$$L_{\alpha}(t)=\frac{\pi^2}{6\alpha t}+\frac{1-\alpha^{-1}}{2}\log t+c_{\alpha} + o(1),$$
where
$$c_{\alpha}=\frac{\gamma}{2\alpha}-\frac{1}{2}\log\left(2\pi\right)+\frac{1-\alpha^{-1}}{2}\log \alpha+\sum_{\ell\ge 1}\left(\frac{1}{2\alpha\ell}+\log\left(1-\frac{\{\alpha\ell\}}{\alpha\ell}\right)\right).$$
\end{proposition}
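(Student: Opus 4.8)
The plan is to reduce everything to the single asymptotic expansion of $L_\alpha(t)$ as $t\to 0^+$, since the formulas for $L_\alpha(t)-L_\alpha(2t)$, $L_\alpha'(t)$ and $L_\alpha''(t)$ follow from it by term-by-term differentiation (justified because the series defining $L_\alpha$ converges locally uniformly on $\rb_+$ together with all its derivatives, the summands being exponentially small). So the heart of the matter is to analyze
$$
L_\alpha(t)=-\sum_{\ell\ge 1}\log\bigl(1-e^{-t\lfloor\alpha\ell\rfloor}\bigr)
=\sum_{\ell\ge1}\sum_{k\ge1}\frac{1}{k}e^{-kt\lfloor\alpha\ell\rfloor}.
$$
First I would write $\lfloor\alpha\ell\rfloor=\alpha\ell-\{\alpha\ell\}=\alpha\ell-1/2-\widetilde B_1(\alpha\ell)$, so the exponent splits as a ``main'' part $\alpha\ell$ plus a bounded oscillatory correction. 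The natural route is a Mellin transform: for the main term one uses $\sum_{\ell\ge1}\sum_{k\ge1}\frac1k e^{-kt\alpha\ell}=-\sum_{k\ge1}\frac1k\log(1-e^{-kt\alpha})$, whose $t\to0^+$ expansion is classical (it is essentially $L_1(\alpha t)$, i.e. the generating function for ordinary partitions), giving $\frac{\pi^2}{6\alpha t}+\frac12\log(\alpha t/2\pi)+o(1)$ via the modular/Euler–Maclaurin estimate $-\sum\frac1k\log(1-e^{-ku})=\frac{\pi^2}{6u}+\frac12\log\frac{u}{2\pi}+O(u)$.

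Next I would isolate the contribution of the fractional parts. Writing $e^{-t\lfloor\alpha\ell\rfloor}=e^{-t\alpha\ell}e^{t\{\alpha\ell\}}$ and expanding the logarithm, the difference
$$
L_\alpha(t)-\Bigl(-\sum_{k\ge1}\tfrac1k\log(1-e^{-kt\alpha})\Bigr)
$$
can be organized so that the leading behavior of the correction is governed by $\sum_{\ell\ge1}\log\!\bigl(1-\{\alpha\ell\}/(\alpha\ell)\bigr)$-type sums with exponential cutoff at $\ell\sim 1/t$. Here the key analytic input is Proposition's hypothesis-free part: by Weyl equidistribution of $(\alpha\ell)$ mod $1$ one has $\sum_{\ell\le N}\widetilde B_1(\alpha\ell)=o(N)$, and partial summation against the smooth weight $e^{-t\alpha\ell}$ (with $N\sim 1/t$) converts this into the statement that the oscillatory contribution is $o(|\log t|)$, which yields the unconditional formula. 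Under the stronger hypothesis that $J_\alpha(1)=\sum_\ell \widetilde B_1(\alpha\ell)/\ell$ converges, Abel summation gives genuine cancellation at the scale $1/\ell$ rather than $1/t$, upgrading $o(|\log t|)$ to a convergent constant; one then must identify that constant, and the regularization $\sum_{\ell\ge1}\bigl(\tfrac{1}{2\alpha\ell}+\log(1-\{\alpha\ell\}/\alpha\ell)\bigr)$ together with the $\gamma/2\alpha$, $-\tfrac12\log2\pi$, $\tfrac{1-\alpha^{-1}}2\log\alpha$ terms appears by matching the $\sum\frac1{2\alpha\ell}$ pieces against $\zeta$-regularization of $\sum e^{-t\alpha\ell}/\ell\sim -\log(\alpha t)-\gamma$ as $t\to0^+$.

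I expect the main obstacle to be the conditional refinement: controlling the error in passing from the cutoff sum $\sum_{\ell\le 1/t}$ to the full series $J_\alpha(1)$ uniformly as $t\to0^+$, i.e. showing that the tail $\sum_{\ell>1/t}\widetilde B_1(\alpha\ell)e^{-t\alpha\ell}/(\alpha\ell)$ and the ``defect'' $\sum_{\ell\le 1/t}\widetilde B_1(\alpha\ell)(e^{-t\alpha\ell}-1)/(\alpha\ell)$ both tend to $0$ — this is a delicate Abel-summation estimate that uses convergence of $J_\alpha(1)$ but no rate, so one cannot afford any quantitative slack and must argue via Cauchy's criterion on the partial sums of $J_\alpha$. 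The unconditional $o(|\log t|)$ bound, by contrast, is comparatively soft. A secondary technical point is bookkeeping the higher terms $k\ge2$ in the double series; these contribute only $O(1)$ (in fact a convergent constant) and are harmless, but must be shown not to interfere with the $\log t$ coefficient, which is pinned down entirely by the $k=1$, main-term piece and equals $\tfrac{1-\alpha^{-1}}{2}$ after combining the $\tfrac12\log(\alpha t)$ from the classical estimate with the $-\tfrac{1}{2\alpha}\log(1/t)$ coming from $\sum_{\ell\le1/t}\tfrac{1}{2\alpha\ell}$.
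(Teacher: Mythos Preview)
There is a genuine gap in your reduction step. You propose to establish only the expansion of $L_\alpha(t)$ and then recover the formulas for $L_\alpha(t)-L_\alpha(2t)$, $L_\alpha'(t)$, $L_\alpha''(t)$ ``by term-by-term differentiation.'' Uniform convergence of the defining series lets you differentiate the \emph{series} for $L_\alpha$, but it does not let you differentiate the \emph{asymptotic relation} $L_\alpha(t)=\frac{\pi^2}{6\alpha t}+\frac{1-\alpha^{-1}}{2}\log t+o(|\log t|)$: an $o(|\log t|)$ error can have derivative far larger than $o(1/t)$. Likewise, subtracting the expansion at $t$ and $2t$ only gives $L_\alpha(t)-L_\alpha(2t)=\frac{\pi^2}{12\alpha t}+o(|\log t|)$, not the required $o(1)$. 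So none of the three remaining statements actually follows from the first.

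The paper's argument runs in the opposite direction. It decomposes $L_\alpha(t)=L_1(\alpha t)+\tfrac{t}{2}D(\alpha t)+R_\alpha(t)+E_\alpha(t)$, where $R_\alpha(t)=\sum_{\ell\ge1}t\widetilde B_1(\alpha\ell)/(e^{t\alpha\ell}-1)$ carries the oscillatory part and $E_\alpha$ is an absolutely convergent ``second-order'' remainder. The non-oscillatory pieces $L_1(\alpha t)+\tfrac t2 D(\alpha t)$ and $E_\alpha$ are analyzed together with their derivatives by direct estimates. For $R_\alpha$ one proves \emph{first} that $R_\alpha'(t)=o(1/t)$ (writing $R_\alpha'$ as a Riemann--Stieltjes integral against $S_\alpha(x)=\sum_{\ell\le x}\widetilde B_1(\alpha\ell)=o(x)$), and only then \emph{integrates} to obtain $R_\alpha(t)=o(|\log t|)$ and $R_\alpha(t)-R_\alpha(2t)=\int_{2t}^{t}R_\alpha'(u)\,du=o(1)$. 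Your equidistribution/partial-summation idea is exactly the right engine, but it must be applied to $R_\alpha'$ (and $R_\alpha''$) directly; once you reorganize the argument this way the four formulas come out with the correct error terms, and the conditional refinement goes through by the Abel-summation argument you sketched.
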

This proposition is a direct consequence of Proposition \ref{pro31}, Lemma \ref{lem32}, and Lemma \ref{lem34} of Section \ref{sec3}.

\medskip

We now prove Theorem \ref{mth}. We just give the proof for $p_{\alpha}(n)$, the proof for $q_{\alpha}(n)$ is similar. Using Proposition \ref{pro21} and Proposition \ref{pro22}, we find that
\begin{align*}
n+\frac{\pi^2}{6\alpha y^2}-\frac{1-\alpha^{-1}}{2y}\left(1+o(1)\right)=0,
\end{align*}
as $n\rrw\infty$. This immediately implies
\begin{equation}\label{eqy}
y=\frac{\pi}{\sqrt{6\alpha n}}-\frac{1-\alpha^{-1}}{4 n}\left(1+o(1)\right),
\end{equation}
as $n\rrw \infty$. Substituting \eqref{eqy} into Proposition \ref{pro21} and Proposition \ref{pro22}, by simplification we obtain the proof of Theorem \ref{mth} for $p_{\alpha}(n)$.

\medskip

We now give the sketch of the proof Corollary \ref{mcor}. Clearly, we just need to prove that under the conditions of Corollary \ref{mcor}, the series \eqref{seriesj} of $J_{\alpha}(1)$ is convergent. Using integration by parts for Riemann-Stieltjes integrals, the convergence of the series \eqref{seriesj} easily follows the following.
\begin{proposition}Let $\alpha$ be satisfies the conditions of Corollary \ref{mcor}. Then, there exist a constant $c_0>0$ such that
$$\sum_{1\le \ell\le x}\widetilde{B}_1(\alpha\ell)\ll \frac{x\psi(c_0\log x)}{\log x},$$
for all sufficiently large $x$.
\end{proposition}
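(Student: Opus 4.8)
The plan is to estimate the exponential sum $\sum_{1\le \ell\le x}\widetilde{B}_1(\alpha\ell)$ using the classical Erdős–Turán / three-distance style control provided by the Diophantine hypothesis. First I would expand $\widetilde{B}_1(\alpha\ell)=\{\alpha\ell\}-1/2$ into its (truncated) Fourier series: for any integer $H\ge 1$,
\[
\widetilde{B}_1(\alpha\ell)=-\sum_{1\le |h|\le H}\frac{e(h\alpha\ell)}{2\pi \ri h}+O\!\left(\min\!\left(1,\frac{1}{H\|\alpha\ell\|}\right)\right),
\]
where $e(y)=e^{2\pi\ri y}$. Summing over $\ell\le x$ and using the geometric-series bound $\bigl|\sum_{\ell\le x}e(h\alpha\ell)\bigr|\ll \min(x,\|h\alpha\|^{-1})$ gives
\[
\Bigl|\sum_{1\le \ell\le x}\widetilde{B}_1(\alpha\ell)\Bigr|\ll \sum_{1\le h\le H}\frac{1}{h\,\|h\alpha\|}+\sum_{\ell\le x}\min\!\left(1,\frac{1}{H\|\alpha\ell\|}\right).
\]
The second sum is itself bounded, after a standard dyadic/boxing argument, by $O\!\left(\frac{x}{H}+\sum_{1\le h\le H}\frac{1}{h}\min(x,\|h\alpha\|^{-1})\right)$, so everything reduces to controlling $\sum_{1\le h\le H}\frac{1}{h\|h\alpha\|}$ and $\sum_{1\le h\le H}\frac1h\min(x,\|h\alpha\|^{-1})$ with $H$ chosen appropriately in terms of $x$.

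**Using the Diophantine hypothesis.** The key input is the hypothesis $q^{-1}\|q\alpha\|>e^{-q\psi(q)}$, i.e. $\|q\alpha\|>q\,e^{-q\psi(q)}$ (note $\psi(q)$ eventually decreasing, $\int_1^\infty \psi(x)x^{-1}\,dx<\infty$). This says that for $q\le H$ one has $\|q\alpha\|^{-1}< q^{-1}e^{q\psi(q)}\le e^{H\psi(1)}$ crudely, but more usefully $\frac{1}{h\|h\alpha\|}< h^{-2}e^{h\psi(h)}$. I would then choose $H=c_0\log x$ for a small absolute constant $c_0>0$: since $\psi$ is bounded near infinity and $\int^\infty\psi/x<\infty$ forces $\psi(x)\to$ (liminf) controlled growth, one gets $h\psi(h)\le \psi(1)h$ in the worst case, so $e^{h\psi(h)}\le x^{c_0\psi(1)}$ for $h\le c_0\log x$. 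Picking $c_0$ small makes this $\le x^{o(1)}$; more carefully, summation by parts against $\int_1^H \psi(u)u^{-1}\,du$ converts the bound into $\sum_{1\le h\le H}h^{-2}e^{h\psi(h)}\ll e^{\int_{1}^{H}\psi(u)\,du/u}\ll_\psi 1$ only if we are cleverer — the honest route is to write $\sum_{h\le H}\frac{1}{h\|h\alpha\|}\ll H\cdot\max_{h\le H}\frac{1}{\|h\alpha\|}\ll H\,e^{H\psi(c_0\log x)}$ after replacing $h\le H=c_0\log x$ by its maximal value inside $\psi$ (legitimate once $H$ is in the decreasing range of $\psi$), giving $\ll \frac{x}{\log x}\cdot\log x\cdot\psi(c_0\log x)$ — this is where the stated shape $\dfrac{x\,\psi(c_0\log x)}{\log x}$ emerges, after balancing against the $x/H=x/(c_0\log x)$ term from the tail sum.

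**Balancing and the main obstacle.** Concretely, with $H=c_0\log x$: the tail contributes $O(x/H)=O(x/\log x)$; the main term contributes $\ll H\,e^{H\psi(H)}$, and the whole point of the hypothesis is that $e^{H\psi(H)}$ can be absorbed into a loss no worse than a factor $\psi(c_0\log x)$ once $c_0$ is chosen so that $c_0\psi(1)<1$ (or, sharper, using that $\psi$ decreasing plus $\int\psi/x<\infty$ forces $\psi(x)=o(1)$ along a subsequence but in fact $\psi(x)\log x\to$ something summable). I expect the \textbf{main obstacle} to be exactly this calibration of $c_0$ and the precise extraction of the factor $\psi(c_0\log x)$: one must show $H\,e^{H\psi(H)}\ll \frac{x\,\psi(H)}{\log x}$, equivalently $e^{H\psi(H)}\ll \frac{x\,\psi(H)}{(\log x)^2}\cdot c_0^{-1}$, which with $H=c_0\log x$ becomes $e^{c_0\log x\cdot \psi(c_0\log x)}=x^{c_0\psi(c_0\log x)}\ll \frac{x\,\psi(c_0\log x)}{(\log x)^2}$; since $\psi(c_0\log x)\to 0$ (a consequence of the integral condition together with monotonicity — otherwise $\int^\infty\psi/x$ would diverge), for large $x$ the exponent $c_0\psi(c_0\log x)<\tfrac12$ say, giving $x^{c_0\psi}\le x^{1/2}$, which is far smaller than the right side; and in fact one wins more. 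The remaining routine work is the Fourier-expansion bookkeeping and verifying that the $\min(x,\|h\alpha\|^{-1})$-sums do not dominate, which they do not because each $\|h\alpha\|^{-1}$ for $h\le H$ is at most $e^{H\psi(H)}=x^{o(1)}$, so $\sum_{h\le H}\frac1h\min(x,\|h\alpha\|^{-1})\ll x^{o(1)}\log x=o(x/\log x)$. Finally, integration by parts in the Riemann–Stieltjes integral $J_\alpha(1)=\int_{1^-}^\infty t^{-1}\,d\bigl(\sum_{\ell\le t}\widetilde B_1(\alpha\ell)\bigr)=\bigl[t^{-1}S(t)\bigr]_{1}^{\infty}+\int_1^\infty t^{-2}S(t)\,dt$ with $S(t)\ll \frac{t\,\psi(c_0\log t)}{\log t}$ makes the boundary term vanish and the integral $\ll \int_1^\infty \frac{\psi(c_0\log t)}{t\log t}\,dt=\int\frac{\psi(u)}{u}\,du<\infty$ after substituting $u=c_0\log t$, establishing the convergence claimed in Corollary \ref{mcor}.
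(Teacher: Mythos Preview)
Your Fourier/Erd\H{o}s--Tur\'an plan has a genuine gap at the tail term. You set $H=c_0\log x$ and explicitly record that ``the tail contributes $O(x/H)=O(x/\log x)$''. But, as you yourself note, the hypotheses (eventual monotonicity together with $\int_1^\infty\psi(x)x^{-1}\,dx<\infty$) force $\psi(y)\to 0$; hence the target bound $x\psi(c_0\log x)/\log x$ is $o(x/\log x)$, and the tail \emph{alone} already exceeds it. No balancing against the main term can fix this. Enlarging $H$ does not help either: once $h$ is substantially larger than $\log x$ the pointwise hypothesis gives only $\|h\alpha\|^{-1}<h^{-1}e^{h\psi(h)}$, and $e^{h\psi(h)}$ may exceed $x$, so that $\min(x,\|h\alpha\|^{-1})=x$ on many terms and $\sum_{h\le H}h^{-1}\min(x,\|h\alpha\|^{-1})$ becomes too large. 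Separately, the step $\sum_{h\le H}(h\|h\alpha\|)^{-1}\ll H\cdot\max_{h\le H}\|h\alpha\|^{-1}$ is much too crude---it applies the worst-case Diophantine lower bound at every $h$, whereas $\|h\alpha\|$ is small only for $h$ near the convergent denominators.

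The paper does not go through Erd\H{o}s--Tur\'an. It appeals instead to Ostrowski's continued-fraction method: write $x$ in its Ostrowski representation with respect to the convergent denominators $q_k$ of $\alpha$, bound $|S_\alpha(x)|$ by a constant multiple of $\sum_{k:\,q_k\le x}a_{k+1}$, and then use the hypothesis at the $q_k$ (where $\|q_k\alpha\|\asymp q_{k+1}^{-1}$) to control the size of the partial quotients $a_{k+1}$ and the growth of $q_{k+1}$ in terms of $\psi(q_k)$. This route has no $x/H$-type loss and is exactly what produces the extra saving factor $\psi(c_0\log x)$. Your final paragraph---recovering the convergence of $J_\alpha(1)$ from the proposition via Riemann--Stieltjes integration by parts and the substitution $u=c_0\log t$---is correct and matches the paper's reduction.
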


\begin{proof}Since for any integer $q\ge 1$,
$$q^{-1}\|q\alpha\|> e^{-q\psi(q)}$$
means that for any $p\in\zb$ with $\gcd(p,q)=1$, there exist a constant $C>0$ such that
$$\left|\alpha-\frac{p}{q}\right|>Ce^{-q\psi(q)}.$$
Thus it not difficult to prove that there exists a constant $c_0>0$ such that
\begin{align}\label{eqm1}
S(x)=\sum_{1\le \ell\le x}\widetilde{B}_1(\alpha\ell)\ll  \frac{x\psi(c_0\log x)}{\log x},
\end{align}
for all sufficiently large $x$, by use of the same idea for the cases of $\alpha$ such that
$$\left|\alpha-\frac{p}{q}\right|>e^{-\lambda q}, ~(\lambda>0~\text{is a constant}),$$
in Ostrowski \cite[p.83]{MR3069389}. This completes the proof of the proposition.
\end{proof}
\section{The proof of Proposition \ref{pro22}}\label{sec3}
The following proposition gives a very well decomposition of the logarithm of the generating function $L_{\alpha}(t)$. From which we can find the main contribution of the asymptotics of the generating function.
\begin{proposition}\label{pro31}Let $t>0$. We have
\begin{align*}
L_{\alpha}(t)=L_{1}(\alpha t)+2^{-1}tD(\alpha t)+R_{\alpha}(t)+E_{\alpha}(t),
\end{align*}
where
\begin{align*}
D(t)=\sum_{n\ge 1}\frac{1}{e^{nt}-1},\;
\;R_{\alpha}(t)=\sum_{\ell\ge 1}\frac{ t\widetilde{B}_1(\alpha\ell)}{e^{t\alpha\ell}-1},
\end{align*}
and
\begin{align*}
E_{\alpha}(t)=\sum_{\ell\ge 1}\int_0^{\{\alpha\ell\}}\,du\int_{0}^{u}K\left(\frac{(\alpha\ell-v)t}{2}\right)\frac{\,d v}{(\alpha\ell-v)^2}
\end{align*}
with $K(u)=u^2/\sinh^2(u)$.

\end{proposition}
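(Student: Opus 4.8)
The goal is to decompose $L_{\alpha}(t) = -\sum_{\ell\ge 1}\log(1-e^{-t\lfloor\alpha\ell\rfloor})$ by comparing $\lfloor\alpha\ell\rfloor$ with $\alpha\ell$ and then Taylor-expanding in the error.

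The plan is as follows. First I would write $\lfloor\alpha\ell\rfloor = \alpha\ell - \{\alpha\ell\}$ and set up the telescoping identity $-\log(1-e^{-t\lfloor\alpha\ell\rfloor}) = -\log(1-e^{-t\alpha\ell}) + \big[\log(1-e^{-t\alpha\ell}) - \log(1-e^{-t\lfloor\alpha\ell\rfloor})\big]$, so that summing the first term over $\ell$ produces exactly $L_1(\alpha t)$ since $L_1(\alpha t) = -\sum_{\ell\ge1}\log(1-e^{-(\alpha t)\ell})$. For the bracketed difference, the idea is to regard it as the increment of the function $g(x) := -\log(1-e^{-tx})$ between $x = \alpha\ell$ and $x = \lfloor\alpha\ell\rfloor = \alpha\ell - \{\alpha\ell\}$, and write it via the fundamental theorem of calculus as $\int_{\lfloor\alpha\ell\rfloor}^{\alpha\ell} g'(x)\,dx = \int_0^{\{\alpha\ell\}} g'(\alpha\ell - u)\,du$, where $g'(x) = \frac{t e^{-tx}}{1-e^{-tx}} = \frac{t}{e^{tx}-1}$.

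Next I would Taylor-expand $g'(\alpha\ell-u)$ around $u=0$ to first order with integral remainder: $g'(\alpha\ell-u) = g'(\alpha\ell) - \int_0^u g''(\alpha\ell-v)\,dv$, where the derivative of $x\mapsto g'(x)=\frac{t}{e^{tx}-1}$ is $g''(x) = -\frac{t^2 e^{tx}}{(e^{tx}-1)^2}$. Integrating the constant term $g'(\alpha\ell)$ over $u\in[0,\{\alpha\ell\}]$ gives $\{\alpha\ell\}\cdot\frac{t}{e^{t\alpha\ell}-1}$; after summing over $\ell$ and separating the mean value $1/2$ of $\{\alpha\ell\}$, namely writing $\{\alpha\ell\} = \tfrac12 + \widetilde B_1(\alpha\ell)$, the $\tfrac12$-part yields $\tfrac12 t\sum_{\ell\ge1}\frac{1}{e^{t\alpha\ell}-1} = \tfrac12 t D(\alpha t)$ and the $\widetilde B_1$-part yields $\sum_{\ell\ge1}\frac{t\widetilde B_1(\alpha\ell)}{e^{t\alpha\ell}-1} = R_{\alpha}(t)$. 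It then remains to identify the remainder $-\sum_{\ell\ge1}\int_0^{\{\alpha\ell\}}du\int_0^u g''(\alpha\ell-v)\,dv$ with $E_{\alpha}(t)$: substituting $g''(x) = -\frac{t^2 e^{tx}}{(e^{tx}-1)^2}$ and using the hyperbolic identity $\frac{w^2}{\sinh^2 w} = \frac{4w^2 e^{2w}}{(e^{2w}-1)^2}$ with $w = tx/2$, one checks $-g''(x) = \frac{t^2 e^{tx}}{(e^{tx}-1)^2} = \frac{1}{x^2}\cdot\frac{(tx/2)^2}{\sinh^2(tx/2)} = \frac{K(tx/2)}{x^2}$, which at $x = \alpha\ell - v$ is exactly the integrand $\frac{1}{(\alpha\ell-v)^2}K\big(\frac{(\alpha\ell-v)t}{2}\big)$ appearing in the definition of $E_{\alpha}(t)$.

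The only genuinely substantive point, beyond bookkeeping, is justifying that all the rearrangements are legitimate: the interchange of summation over $\ell$ with the double integrals, and the convergence of each of the four resulting series $L_1(\alpha t)$, $\tfrac12 t D(\alpha t)$, $R_\alpha(t)$, $E_\alpha(t)$ for fixed $t>0$. Since $|\widetilde B_1|\le\tfrac12$ and $\frac{t}{e^{t\alpha\ell}-1}$ decays exponentially in $\ell$, the series for $D$ and $R_\alpha$ converge absolutely; for $E_\alpha$ one uses $0\le K(u)\le 1$ together with $\{\alpha\ell\} < 1$ to bound the $\ell$-th term by $\int_0^1\!\!\int_0^u \frac{dv\,du}{(\alpha\ell-1)^2} \le \frac{1}{(\alpha\ell-1)^2}$, which is summable (the first few terms where $\alpha\ell-v$ could be small are handled separately since $\alpha>1$ keeps $\alpha\ell - v \ge \alpha\ell - 1 > 0$ for $\ell$ large, and $\ell=1$ is a single bounded term). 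Everything here is uniform enough on compact subsets of $(0,\infty)$ that Fubini/Tonelli and term-by-term manipulation apply, so no delicate estimate is required — the proposition is an exact algebraic identity once the Taylor-with-remainder expansion and the hyperbolic rewriting of $g''$ are in place.
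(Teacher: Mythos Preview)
Your argument is correct and follows essentially the same route as the paper's proof: write each summand as $-\log(1-e^{-t\alpha\ell})$ plus an integral correction $\int_0^{\{\alpha\ell\}}\frac{t\,du}{e^{t(\alpha\ell-u)}-1}$, expand the integrand to first order in $u$ with integral remainder, split $\{\alpha\ell\}=\tfrac12+\widetilde B_1(\alpha\ell)$, and rewrite the remainder via $K(u)=u^2/\sinh^2 u$. One small slip worth fixing: with your definition $g(x)=-\log(1-e^{-tx})$ one has $g'(x)=-\dfrac{t}{e^{tx}-1}$, not $+\dfrac{t}{e^{tx}-1}$, and the bracketed difference equals $g(\lfloor\alpha\ell\rfloor)-g(\alpha\ell)=-\int_{\lfloor\alpha\ell\rfloor}^{\alpha\ell}g'(x)\,dx$; the two sign errors cancel, so your subsequent formulas are correct, but it would be cleaner to set $g(x)=\log(1-e^{-tx})$ (or simply work directly with $\frac{t}{e^{tx}-1}$ as the paper does).
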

\begin{proof}First of all, by a direct calculation, we find that
\begin{align*}
L_{\alpha}(t)&=-\sum_{\ell\ge 1}\log\left(1-e^{-t \alpha\ell}\right)+\sum_{\ell\ge 1}\int_{t\lfloor \alpha\ell\rfloor}^{t \alpha\ell}\,d\log\left(1-e^{-u}\right)\\
&=L_{1}(\alpha t)+\sum_{\ell\ge 1}\int_0^{\{\alpha\ell\}}\frac{t\,du}{e^{t (\alpha\ell-u)}-1}.
\end{align*}
For the second sum above, we split that
\begin{align*}
\sum_{\ell\ge 1}\int_0^{\{\alpha\ell\}}\frac{t\,du}{e^{t (\alpha\ell-u)}-1}=&\sum_{\ell\ge 1}\frac{t\{\alpha\ell\}}{e^{t \alpha\ell}-1}+\sum_{\ell\ge 1}\int_0^{\{\alpha\ell\}}\,du\int_{0}^{u}\frac{t^2e^{t (\alpha\ell-v)}}{(e^{t (\alpha\ell-v)}-1)^2}\,d v\\
&=\frac{t}{2}\sum_{\ell\ge 1}\frac{1}{e^{t \alpha\ell}-1}+\sum_{\ell\ge 1}\frac{ t\widetilde{B }_1(\alpha\ell)}{e^{t\alpha\ell}-1}+E_{\alpha}(t),
\end{align*}
where $\widetilde{B }_1(x)=\{x\}-1/2$, and
\begin{align*}
E_{\alpha}(t)=\sum_{\ell\ge 1}\int_0^{\{\alpha\ell\}}\,du\int_{0}^{u}\left[\frac{(\alpha\ell-v)t/2}{\sinh((\alpha\ell-v)t/2)}\right]^2\frac{\,d v}{(\alpha\ell-v)^2}.
\end{align*}
This completes the proof of the proposition.
\end{proof}

In the following content, we estimate each component of the above decomposition of the logarithm of the generating function $L_{\alpha}(t)$.

\begin{lemma}\label{lem32}Let $t\rrw 0^+$. Denoting by
$$\hat{L}_{\alpha}(t)=L_1(\alpha t)+2^{-1}tD(\alpha t),$$
then we have
$$\hat{L}_{\alpha}(t)=\frac{\pi^2}{6\alpha t}+\frac{1-\alpha^{-1}}{2}\log t+\frac{1-\alpha^{-1}}{2}\log \alpha+\frac{\gamma}{2\alpha}-\frac{1}{2}\log\left(2\pi\right)+O(t^{1/2}),$$
$$\hat{L}_{\alpha}'(t)=-\frac{\pi^2}{6\alpha t^2}+\frac{1-\alpha^{-1}}{2t}+O(t^{-1/2}),$$
and
$$\hat{L}_{\alpha}''(t)=\frac{\pi^2}{3\alpha t^3}+O\left(\frac{1}{t^{2}}\right).$$
\end{lemma}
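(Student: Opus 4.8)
\textbf{Proof proposal for Lemma \ref{lem32}.} The plan is to reduce everything to the classical asymptotics of the two functions $L_1(u)=-\sum_{n\ge 1}\log(1-e^{-nu})$ and $D(u)=\sum_{n\ge 1}(e^{nu}-1)^{-1}$ as $u\to 0^+$, with $u=\alpha t$, and then differentiate. For $L_1$, the standard route is the Mellin transform: one has $L_1(u)=\sum_{n\ge 1}\sum_{k\ge 1}k^{-1}e^{-nku}$, whose Mellin transform in $u$ is $\Gamma(s)\zeta(s)\zeta(s+1)$, so moving the contour past the poles at $s=1$ (residue $\pi^2/6$, from $\zeta(2)$), $s=0$ (residue coming from $\zeta(0)=-\tfrac12$ together with the Laurent expansion $\zeta(s+1)=1/s+\gamma+\cdots$ and $\Gamma(s)=1/s-\gamma+\cdots$, giving the $\tfrac12\log u$ term and the constant $-\tfrac12\log(2\pi)$), and $s=-1$ (harmless, contributing to the error) yields
$$L_1(u)=\frac{\pi^2}{6u}-\frac12\log u-\frac12\log(2\pi)+O(u).$$
Similarly $D(u)=\sum_{n\ge1}\sum_{k\ge1}e^{-nku}$ has Mellin transform $\Gamma(s)\zeta(s)\zeta(s)$ — actually $\Gamma(s)\zeta(s)^2$ — wait, more carefully $D(u)=\sum_{m\ge1}d(m)e^{-mu}$ has transform $\Gamma(s)\zeta(s)^2$, with a double pole at $s=1$ giving $u^{-1}(\gamma-\log u)$ and a pole at $s=0$ from $\Gamma$ giving $\zeta(0)^2=\tfrac14$; thus
$$D(u)=\frac{1}{u}\Bigl(\gamma-\log u\Bigr)+\frac14+O(u).$$
(Alternatively one can quote the known expansion of $\log\prod(1-q^n)$ and of $\sum q^n/(1-q^n)$ from the Dedekind eta and Lambert-series literature rather than re-deriving it.)

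Next I would assemble $\hat L_\alpha(t)=L_1(\alpha t)+\tfrac12 t\,D(\alpha t)$. Writing $u=\alpha t$: the first term contributes $\frac{\pi^2}{6\alpha t}-\tfrac12\log(\alpha t)-\tfrac12\log(2\pi)+O(t)$. For the second term, $\tfrac12 t D(\alpha t)=\tfrac12 t\bigl[\frac{1}{\alpha t}(\gamma-\log(\alpha t))+\tfrac14+O(t)\bigr]=\frac{\gamma-\log(\alpha t)}{2\alpha}+O(t)$. Adding the $\log$ pieces: $-\tfrac12\log(\alpha t)-\tfrac{1}{2\alpha}\log(\alpha t)=-\tfrac12(1+\alpha^{-1})\log(\alpha t)$ — hmm, that does not immediately look like $\tfrac{1-\alpha^{-1}}{2}\log t$, so here one must be careful: the $-\tfrac{1}{2\alpha}\log(\alpha t)$ should actually pair differently. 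Let me recheck: in fact $\tfrac12 t D(\alpha t)$ should be expanded keeping $t$ explicit, and the $-\log u$ inside is $-\log\alpha-\log t$; combining with the $-\tfrac12\log u=-\tfrac12\log\alpha-\tfrac12\log t$ from $L_1$ and recognizing that the coefficient of $\log t$ must come out to $\tfrac{1-\alpha^{-1}}{2}$ forces the sign bookkeeping: the $D$-term contributes $+\tfrac{1}{2\alpha}\cdot(-\log t)$? No — one finds $-\tfrac12\log t-\tfrac{1}{2\alpha}\log t$ has coefficient $-\tfrac12(1+\alpha^{-1})$, which is wrong, so the correct reading is that $D(u)=\tfrac1u(-\log u+\gamma)+\cdots$ but the sign in front is such that... this is exactly the spot where I must be most careful, and the resolution is that the $\log\alpha$ pieces and a compensating term reorganize; I will simply carry out the Mellin/Lambert-series bookkeeping honestly, collect the coefficient of $\log t$, of the constant, and of $1/t$, and verify they match the stated formula (the $\tfrac{1-\alpha^{-1}}{2}\log\alpha$, $\tfrac{\gamma}{2\alpha}$, $-\tfrac12\log 2\pi$, $\tfrac{\pi^2}{6\alpha t}$ all appear naturally; the error $O(t^{1/2})$ in the lemma is weaker than the $O(t)$ I expect, presumably a safe overestimate or absorbing a contour-shift remainder).

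Finally, the derivative statements. Rather than differentiate the asymptotic expansion term by term (which is not automatically justified for asymptotic series), I would either (a) differentiate the Mellin-Barnes integral representations directly — differentiation in $u$ pulls down a factor $-s/u$ inside the integrand, shifting $\Gamma(s)\mapsto s\Gamma(s)=\Gamma(s+1)$ and moving everything by one unit, so the same residue computation applies and the error term is controlled in exactly the same way — or (b) invoke the fact that $\hat L_\alpha$ and its derivatives are Laplace-type transforms of positive monotone data, so the classical Tauberian-with-remainder or "differentiation of asymptotics for completely monotone functions" principle applies. Either way one reads off $\hat L_\alpha'(t)=-\frac{\pi^2}{6\alpha t^2}+\frac{1-\alpha^{-1}}{2t}+O(t^{-1/2})$ and $\hat L_\alpha''(t)=\frac{\pi^2}{3\alpha t^3}+O(t^{-2})$.

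\textbf{Main obstacle.} The genuinely delicate point is the constant term and the $\log$-coefficient bookkeeping in $\hat L_\alpha(t)$: one must correctly combine the $s=0$ residues of $\Gamma(s)\zeta(s)\zeta(s+1)$ (which mixes the simple pole of $\zeta(s+1)$ with the simple pole of $\Gamma(s)$, producing both a $\log$ term and $-\tfrac12\log 2\pi$) with the contribution of $\tfrac12 t D(\alpha t)$ coming from the double pole of $\Gamma(s)\zeta(s)^2$ at $s=1$, and track every appearance of $\log\alpha$; a sign or factor-of-two slip here propagates into $\Lambda_\alpha$ in Theorem \ref{mth}. The derivative estimates and the error terms, by contrast, are routine once the representation is set up.
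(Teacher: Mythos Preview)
Your Mellin-transform route is a valid alternative, but the confusion you flag in the $\log t$ bookkeeping comes from a concrete sign slip: the residue of $\Gamma(s)\zeta(s)\zeta(s+1)u^{-s}$ at $s=0$ is $+\tfrac12\log u-\tfrac12\log(2\pi)$, not $-\tfrac12\log u-\tfrac12\log(2\pi)$. Near $s=0$ the simple poles of $\Gamma(s)=1/s-\gamma+\cdots$ and $\zeta(s+1)=1/s+\gamma+\cdots$ multiply to $1/s^2+O(1)$ (the $\pm\gamma/s$ terms cancel); multiplying by $\zeta(s)u^{-s}=(-\tfrac12-\tfrac12\log(2\pi)\,s)(1-s\log u)+O(s^2)$ gives $1/s$-coefficient $\tfrac12(\log u-\log 2\pi)$. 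Equivalently, the eta-transformation reads $L_1(u)=\tfrac{\pi^2}{6u}+\tfrac12\log u-\tfrac12\log(2\pi)+O(u)$. With the corrected sign the combination is immediate: $+\tfrac12\log(\alpha t)$ from $L_1$ and $-\tfrac{1}{2\alpha}\log(\alpha t)$ from $\tfrac12 tD(\alpha t)$ sum to $\tfrac{1-\alpha^{-1}}{2}\log(\alpha t)$, matching the lemma's $\log t$ and $\log\alpha$ terms exactly. So there is no mysterious ``compensating term'' --- just a sign to fix.

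The paper takes a different, more elementary path: it writes $D(t)=\sum_{n\ge1}\tau(n)e^{-nt}$, feeds Dirichlet's estimate $\sum_{n\le x}\tau(n)=x(\log x+2\gamma-1)+O(x^{1/2})$ through a Riemann--Stieltjes integration by parts to obtain $tD(t)=\gamma-\log t+O(t^{1/2})$ and simultaneously $(tD(t))'=-t^{-1}+O(t^{-1/2})$, $(tD(t))''\ll t^{-2}$, and then quotes the eta-transformation for $L_1$. This explains why the stated error is $O(t^{1/2})$ rather than the $O(t)$ your Mellin argument would give --- it is precisely the divisor-problem remainder --- and it sidesteps entirely the issue of justifying termwise differentiation, since the derivatives of $tD(t)$ are computed directly from $t\sum n^j\tau(n)e^{-nt}$ for $j=0,1,2$ in the same stroke.
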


\begin{proof}Let $t\rrw 0^+$. Notice that
$$D(t)=\sum_{n\ge 1}\frac{e^{-nt}}{1-e^{-nt}}=\sum_{n\ge 1}\tau(n)e^{-nt},$$
where $\tau(n)=\sum_{d|n}1$ is the  divisor function. Using the well-known fact for divisor function $\tau(n)$ that
$$\sum_{n\le x}\tau(n)=(\log x+2\gamma-1)x+O(x^{1/2}),$$
and integration by parts for Riemann-Stieltjes integrals, we have
\begin{align*}
t\sum_{n\ge 1}n^j\tau(n)e^{-nt}&=t\int_{1-}^{\infty}x^je^{-xt}\,d\left(\sum_{n\le x}\tau(n)\right)\\
&=t\int_{1}^{\infty}x^je^{-xt}\,d\left((\log x+2\gamma-1)x\right)+O(t^{1/2-j})\\
&=t^{-j}\int_{t}^{\infty}x^j(\log x+2\gamma-\log t)e^{-x}\,dx+O(t^{1/2})\\
&=j!t^{-j}(\gamma-\log t)+t^{-j}\int_{0}^{\infty}x^j(\log x+\gamma)e^{-x}\,dx+O(t^{1/2-j}).
\end{align*}
Further, by note that
$$\int_{0}^{\infty}(\log x+\gamma)e^{-x}\,dx=0\; \text{and}\; \int_{0}^{\infty}x(\log x+\gamma)e^{-x}\,dx=1,$$
which immediately implies
$$
tD(t)=\gamma-\log t+O(t^{1/2}),\;
(tD(t))'=-t^{-1}+O(t^{-1/2}),\; \mbox{and}\;
(tD(t))''\ll t^{-2},
$$
by a direct calculation.  Finally, together with the well-known transform relation that
$$L_1(t)=-\frac{t}{24}-\frac{1}{2}\log(2\pi)+\frac{1}{2}\log t+\frac{\pi^2}{6t}+L_1\left(\frac{4\pi^2}{t}\right),$$
see for example \cite[Equation (1.42)]{HR1918}, and above estimates for $D(t)$ we immediately obtain the proof of the lemma.
\end{proof}

\begin{lemma}\label{lem33}Let $t\rrw 0^+$. We have
$$E_{\alpha}(t)=E_{\alpha}(0)+o(1)$$
with
$$E_{\alpha}(0)=-\sum_{\ell\ge 1}\left(\frac{\{\alpha\ell\}}{\alpha\ell}+\log\left(1-\frac{\{\alpha\ell\}}{\alpha\ell}\right)\right),$$
and for each integer $k\ge 1$,
$$E_{\alpha}^{\langle k\rangle}(t)\ll_k t^{1-k}.$$
\end{lemma}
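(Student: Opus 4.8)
The plan is to analyze
$$E_{\alpha}(t)=\sum_{\ell\ge 1}\int_0^{\{\alpha\ell\}}\,du\int_{0}^{u}K\!\left(\frac{(\alpha\ell-v)t}{2}\right)\frac{dv}{(\alpha\ell-v)^2},\qquad K(u)=\frac{u^2}{\sinh^2 u},$$
by exploiting the two basic properties of $K$: it is bounded ($0<K(u)\le 1$ for all real $u$, with $K(0)=1$) and it is smooth with $K(u)=1+O(u^2)$ near $0$. The key observation is that in the inner integrals $v$ ranges over $[0,u]\subseteq[0,\{\alpha\ell\}]$, so $\alpha\ell-v\in[\lfloor\alpha\ell\rfloor,\alpha\ell]$, which is bounded below by $\lfloor\alpha\ell\rfloor\ge \alpha\ell-1\gg \ell$ once $\ell$ is large (say $\ell\ge \ell_0$ with $\alpha\ell_0>2$). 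Hence the argument $(\alpha\ell-v)t/2$ is $\gg \ell t$, and the double integral over $u,v$ is over a region of area $\le \tfrac12\{\alpha\ell\}^2\le \tfrac12$, with integrand $\le (\alpha\ell-v)^{-2}\ll \ell^{-2}$. This already gives termwise domination by $C\ell^{-2}$, uniformly in $t>0$, so the series converges absolutely and uniformly in $t\ge 0$.

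From here the limit statement is immediate by dominated convergence: as $t\to 0^+$, $K((\alpha\ell-v)t/2)\to K(0)=1$ pointwise, so each term tends to $\int_0^{\{\alpha\ell\}}\!du\int_0^u (\alpha\ell-v)^{-2}\,dv$, and
$$E_{\alpha}(0)=\sum_{\ell\ge 1}\int_0^{\{\alpha\ell\}}\!\left(\frac{1}{\alpha\ell-u}-\frac{1}{\alpha\ell}\right)du
=\sum_{\ell\ge 1}\left(-\log\!\Big(1-\frac{\{\alpha\ell\}}{\alpha\ell}\Big)-\frac{\{\alpha\ell\}}{\alpha\ell}\right),$$
after evaluating the elementary inner integral ($\int_0^u(\alpha\ell-v)^{-2}dv=(\alpha\ell-u)^{-1}-(\alpha\ell)^{-1}$, then integrating in $u$). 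This matches the claimed formula for $E_{\alpha}(0)$. To make the convergence of $E_\alpha(t)$ to $E_\alpha(0)$ quantitative is not needed for the statement, but note one could split the tail $\ell>1/t$ (bounded by $\sum_{\ell>1/t}\ell^{-2}\ll t$) from the head $\ell\le 1/t$ where $K((\alpha\ell-v)t/2)-1\ll (\ell t)^2$ contributes $\ll \sum_{\ell\le 1/t}\ell^{-2}(\ell t)^2=t^2\lfloor 1/t\rfloor\ll t$; thus in fact $E_\alpha(t)=E_\alpha(0)+O(t)$, which is stronger than the $o(1)$ required.

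For the derivative bounds, differentiate under the summation and the outer $du$-integral: $\partial_t^k$ falls on $K((\alpha\ell-v)t/2)$, producing $(\alpha\ell-v)^k 2^{-k}K^{(k)}((\alpha\ell-v)t/2)$ in the inner integrand, against the weight $(\alpha\ell-v)^{-2}$. Since $K^{(k)}$ is smooth and decays exponentially at $\pm\infty$ (each derivative of $u^2/\sinh^2 u$ is $O(e^{-|u|})$), we have the uniform bound $|w^{k-2}K^{(k)}(wt/2)|\ll_k w^{-2}\min\{1,(wt)^{k-2+\text{(anything)}}\}$; more simply, $w^{k-2}|K^{(k)}(wt/2)|\le (2/t)^{k-2}\sup_{x}|x^{k-2}K^{(k)}(x)|\cdot t^{k-2}\cdot$ — cleaner: bound $|K^{(k)}(x)|\ll_k (1+|x|)^{-2}$ say, giving inner integrand $\ll_k (\alpha\ell-v)^{k-2}(1+(\alpha\ell-v)t)^{-2}\ll_k t^{-2}(\alpha\ell-v)^{k-4}$ for $\alpha\ell-v\gg 1/t$, and $\ll_k(\alpha\ell-v)^{k-2}$ otherwise; summing over the area-$\le\!1/2$ region and then over $\ell$ (the tail $\ell\gg 1/t$ dominates, contributing $\sum_{\ell\gg 1/t}t^{-2}\ell^{k-4}$, which for $k\le 1$ is $\ll t^{-2}\cdot t^{3-k}=t^{1-k}$; the finitely many small $\ell<\ell_0$ are handled separately since there $\{\alpha\ell\}$ could be large but there are only $O(1)$ of them and the integral is a fixed smooth function of $t$, $O(1)=O(t^{1-k})$ as $t\to 0$). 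Carrying out the exponents carefully yields $E_\alpha^{\langle k\rangle}(t)\ll_k t^{1-k}$ for every $k\ge 1$, and one justifies the interchange of $\partial_t^k$ with $\sum_\ell$ and the integrals by the same uniform termwise bounds (Weierstrass $M$-test on compact $t$-intervals in $(0,\infty)$).

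The main obstacle is bookkeeping the exponents in the derivative estimate: one must track that the worst contribution comes from large $\ell$ (where $\alpha\ell-v\asymp \ell$ is large but the weight $\ell^{-2}$ helps) versus the interplay with the cutoff $\ell\asymp 1/t$ where $K^{(k)}$ transitions from polynomial-in-$\ell t$ to exponentially small, and to confirm that the resulting sum is indeed $O(t^{1-k})$ and not something larger; the limit computation and the $o(1)$ claim, by contrast, are routine given the uniform $C\ell^{-2}$ domination.
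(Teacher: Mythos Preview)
Your approach coincides with the paper's: dominated convergence for the limit (the paper invokes it in one line; your explicit $C\ell^{-2}$ majorant and the computation of $E_\alpha(0)$ are correct and in fact sharper, giving $O(t)$ rather than $o(1)$), and termwise differentiation followed by a split at $\ell\asymp 1/t$ for the derivatives.

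The derivative argument, however, has a concrete gap. After correctly noting that each $K^{(k)}$ decays exponentially, you switch to the ``cleaner'' bound $|K^{(k)}(x)|\ll_k(1+|x|)^{-2}$. This is too weak: with it, the tail contribution becomes $t^{-2}\sum_{\ell>1/t}\ell^{k-4}$, which diverges for $k\ge 4$ (and your parenthetical ``for $k\le 1$'' should presumably read $k\le 3$). You must keep the exponential decay; the paper uses simply
\[
K^{\langle k\rangle}(u)\ \ll_k\ \min(1,e^{-u}),
\]
which gives
\[
E_\alpha^{\langle k\rangle}(t)\ \ll_k\ \sum_{\ell\le 1/t}\ell^{k-2}\;+\;\sum_{\ell>1/t}e^{-\alpha\ell t/2}\ell^{k-2}\ \ll_k\ t^{1-k}
\]
for $k\ge 2$. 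Note also that for $k=1$ this crude head estimate yields only $\sum_{\ell\le 1/t}\ell^{-1}\asymp\log(1/t)$, not $O(1)$; to get the stated $E_\alpha'(t)\ll 1$ one should use the extra information $K'(0)=0$, i.e.\ $K'(u)\ll|u|$ near $0$, which improves the head term to $\sum_{\ell\le 1/t}\ell^{-1}\cdot(\ell t)\ll 1$. (This refinement is not spelled out in the paper either, though the weaker $O(\log(1/t))$ already suffices for Proposition~\ref{pro22}.) Your justification of differentiating under the sum via uniform bounds on compact $t$-intervals is fine once the correct decay is in place.
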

\begin{proof}The proof for the value of $E_{\alpha}(0)$ is direct by use of Lebesgue's Dominated Convergence Theorem.  We now prove the estimates for the derivative of $E_{\alpha}(t)$. Using the definition of $E_{\alpha}(t)$, we have
\begin{align*}
E_{\alpha}^{\langle k\rangle}(t)=\sum_{\ell\ge 1}\int_0^{\{\alpha\ell\}}\,du\int_{0}^{u}K^{\langle k\rangle}\left(\frac{(\alpha\ell-v)t}{2}\right)\frac{\,d v}{2^k(\alpha\ell-v)^{2-k}}.
\end{align*}
Note that
$$K^{\langle k\rangle}(u)\ll_k \min(1, e^{-u}),$$
we have
\begin{align*}
E_{\alpha}^{\langle k\rangle}(t)\ll_k\sum_{1\le \ell\le 1/t}\ell^{k-2}+\sum_{\ell> 1/t}e^{-\alpha \ell t/2}\ell^{k-2}\ll_k t^{1-k},
\end{align*}
as $t\rrw 0^+$. This completes the proof of lemma.
\end{proof}
We finally prove the following lemma which plays an important role in this paper.
\begin{lemma}\label{lem34}As $t\rrw 0^+$
$$R_{\alpha}'(t)=o(t^{-1})\;\text{and}\; R_{\alpha}''(t)\ll t^{-2}.$$
Furthermore,
$$
R_{\alpha}(t)=o(|\log t|)\;\;\mbox{and}\;\;R_{\alpha}(t)-R_{\alpha}(2t)=o(1).
$$
Moreover, if the series \eqref{seriesj} of $J_{\alpha}(1)$ is convergent then
$$R_{\alpha}(t)=\alpha^{-1}J_{\alpha}(1)+o(1).$$
\end{lemma}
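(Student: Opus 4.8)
The plan is to analyze $R_{\alpha}(t) = \sum_{\ell\ge 1} \frac{t\widetilde{B}_1(\alpha\ell)}{e^{t\alpha\ell}-1}$ via partial summation against the partial sums $S(x) = \sum_{1\le \ell\le x}\widetilde{B}_1(\alpha\ell)$, which by Weyl equidistribution of $(\alpha\ell)_{\ell}$ satisfy $S(x) = o(x)$ as $x\to\infty$ (this is the only input needed for the unconditional claims, and it is already implicit in the ``equidistribution properties'' invoked for Proposition \ref{pro21}). Writing $R_{\alpha}(t) = t\int_{1^-}^{\infty} g_t(x)\,dS(x)$ with $g_t(x) = (e^{t\alpha x}-1)^{-1}$, integration by parts gives $R_{\alpha}(t) = -t\int_1^{\infty} S(x)g_t'(x)\,dx + (\text{boundary})$, and since $g_t'(x) = -\frac{t\alpha e^{t\alpha x}}{(e^{t\alpha x}-1)^2}$ decays exponentially for $x\gg 1/t$, the substitution $x = u/t$ shows the integral is $O\!\big(\sup_{x\ge X}|S(x)/x|\big)$ for any fixed cutoff plus a negligible tail, which is $o(1)$; similarly one differentiates under the integral sign (or differentiates the series termwise, justified by the same exponential decay) to get $R_{\alpha}'(t) = o(t^{-1})$ and $R_{\alpha}''(t) \ll t^{-2}$. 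The bound $R_{\alpha}(t) = o(|\log t|)$ is then immediate (in fact $R_{\alpha}(t) = o(1)$ falls out of the same computation), and $R_{\alpha}(t) - R_{\alpha}(2t) = o(1)$ follows by the same argument applied to the difference, or by integrating $R_{\alpha}'$ over $[t,2t]$.

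For the conditional statement, suppose $J_{\alpha}(1) = \sum_{\ell\ge 1}\widetilde{B}_1(\alpha\ell)/\ell$ converges. The idea is to compare $R_{\alpha}(t)$ with $\alpha^{-1}J_{\alpha}(1)$ by writing
\begin{align*}
R_{\alpha}(t) - \alpha^{-1}J_{\alpha}(1) = \sum_{\ell\ge 1}\widetilde{B}_1(\alpha\ell)\left(\frac{t}{e^{t\alpha\ell}-1} - \frac{1}{\alpha\ell}\right),
\end{align*}
and noting that the kernel $\phi(u) := \frac{1}{e^{u}-1} - \frac{1}{u}$ satisfies $\phi(u) = -\tfrac12 + O(u)$ near $0$ and $\phi(u) \to 0$ exponentially as $u\to\infty$, so that $\frac{t}{e^{t\alpha\ell}-1} - \frac{1}{\alpha\ell} = \frac{1}{\alpha\ell}\big(\alpha\ell t\,\phi(\alpha\ell t)\big) =: \frac{1}{\alpha\ell}\Phi(\alpha\ell t)$ where $\Phi(v) = v\phi(v)$ is bounded, tends to $0$ as $v\to\infty$, and is $O(v)$ as $v\to 0$. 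Then one applies summation by parts to $\sum_{\ell} \frac{\widetilde{B}_1(\alpha\ell)}{\ell}\cdot\Phi(\alpha\ell t)$ using the convergent tails $T(N) := \sum_{\ell > N}\widetilde{B}_1(\alpha\ell)/\ell = o(1)$ of $J_{\alpha}(1)$: the sum becomes $\sum_{\ell}\big(T(\ell-1)-T(\ell)\big)\Phi(\alpha\ell t)$, and after Abel rearrangement it is controlled by $\sup_{\ell\ge M}|T(\ell)|$ (small for $M$ large) together with $\sum_{\ell\le M}|T(\ell)|\,|\Phi(\alpha(\ell+1)t) - \Phi(\alpha\ell t)|$, which is $O\big(t\sum_{\ell\le M}|T(\ell)|\cdot\alpha\big) \to 0$ as $t\to 0^+$ for each fixed $M$ since $\Phi$ is Lipschitz on $[0,\infty)$. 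Sending $t\to 0^+$ first and then $M\to\infty$ yields $R_{\alpha}(t) - \alpha^{-1}J_{\alpha}(1) = o(1)$.

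I expect the main obstacle to be making the conditional argument fully rigorous, specifically interchanging the order of the limits $t\to 0^+$ and $M\to\infty$: the convergence of $J_{\alpha}(1)$ gives control of the tails $T(N)$, but the weight $\Phi(\alpha\ell t)$ varies slowly in $\ell$ only on the scale $\ell\sim 1/t$, so one must be careful that the ``main range'' $\ell \lesssim 1/t$ — where $\Phi(\alpha\ell t)$ is not yet small — does not accumulate error. The resolution is exactly the two-parameter Abel summation above: the variation $\sum_{\ell}|\Phi(\alpha(\ell+1)t)-\Phi(\alpha\ell t)|$ is uniformly bounded (it telescopes against a bounded monotone-ish function, using $\|\Phi'\|_{L^1}<\infty$), so the contribution of the tails beyond any fixed $M$ is $\ll \sup_{\ell>M}|T(\ell)|$ uniformly in $t$, which is the key uniformity that lets the limits be taken in the right order. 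A secondary (routine) point is justifying termwise differentiation of the series defining $R_{\alpha}$; this is immediate from the uniform exponential decay of the differentiated summands on any interval $t\in[\delta, 1]$ and a dominated-convergence argument as $t\to 0^+$, exactly as in the proof of Lemma \ref{lem33}.
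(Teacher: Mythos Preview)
Your approach coincides with the paper's: Abel/Riemann--Stieltjes summation against $S_\alpha(x)=o(x)$ for the unconditional claims, and against the convergent tails of $J_\alpha(1)$ with the bounded-variation weight $\Phi(v)=v/(e^v-1)-1$ for the conditional one. Two minor slips that do not affect the proof of the lemma as stated: (i) your parenthetical claim that $R_\alpha(t)=o(1)$ unconditionally does \emph{not} follow from your argument---after the substitution $x=u/t$ the kernel $u e^{u}/(e^{u}-1)^{2}$ behaves like $1/u$ near $0$ and produces a logarithm, so only $o(|\log t|)$ emerges, which is precisely what the lemma asserts and precisely why \eqref{eqm10} carries the exponent $o(1)$ rather than a constant; (ii) $\Phi(v)\to -1$, not $0$, as $v\to\infty$, but this is harmless since your Abel step only uses that $\Phi$ has bounded total variation, which it does (it decreases monotonically from $0$ to $-1$).
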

\begin{proof}Let $t\rrw 0^+$. The proof of the estimate for $R_{\alpha}''(t)$ is a direct calculation. In fact, by the definition of $R_{\alpha}(t)$,
\begin{align*}
R_{\alpha}''(t)&=\frac{\,d^2}{\,d t^2}\sum_{\ell\ge 1}\frac{t\widetilde{B}_{1}(\alpha\ell)}{e^{\alpha\ell t}-1}\\
&=\sum_{\ell\ge 1}\widetilde{B}_{1}(\alpha\ell)(\alpha\ell)\frac{\,d^2}{\,d u^2}\bigg|_{u=\alpha\ell t}\frac{u}{e^u-1}\\
&\ll \sum_{1\le \ell\le 1/t}\ell+\sum_{\ell>1/t}\ell(\ell t)e^{-\alpha \ell t}\ll t^{-2}.
\end{align*}
We now give the proof of the estimate for $R_{\alpha}'(t)$. Since the sequence $(\widetilde{B}_1(\alpha\ell))_{\ell\in\nb}$ is equidistributed in [-1/2,1/2) for all $\alpha\in \rb\setminus \qb$, we have
\begin{align}\label{eqd1}
S_{\alpha}(x):=\sum_{1\le \ell\le x}\widetilde{B}_1(\alpha\ell)=o(x),
\end{align}
as $x\rrw+\infty$. Using the definition of $R_{\alpha}(t)$, we have
\begin{align*}
R_{\alpha}'(t)&=\sum_{\ell\ge 1}\frac{e^{\alpha\ell t}-1-\alpha\ell te^{\alpha \ell t}}{(e^{\alpha\ell t}-1)^2}\widetilde{B}_1(\alpha\ell)\\
&=\int_{1-}^{\infty}\frac{e^{\alpha x t}-1-\alpha x e^{\alpha x t}}{(e^{\alpha x t}-1)^2}\,d S_{\alpha}(x)\\
&=\int_{\alpha t}^{\infty} S_{\alpha}(u/\alpha t) \left(\frac{e^{u}-1-ue^{u}}{(e^{u}-1)^2}\right)'\,du.
\end{align*}
Thus the using of \eqref{eqd1} implies that
\begin{align*}
R_{\alpha}'(t)&\ll\int_{\alpha t}^{(\alpha t)^{1/2}} \left|\frac{u}{\alpha t}\right| \,du+o\left(\frac{1}{\alpha t}\int_{(\alpha t)^{1/2}}^{\infty} u\left|\left(\frac{e^{u}-1-ue^{u}}{(e^{u}-1)^2}\right)'\right|\,du\right)\\
&\ll 1+o(t^{-1})=o(t^{-1}).
\end{align*}
Therefore,
\begin{align*}
R_{\alpha}(t)&=R_{\alpha}(1)+\int_{1}^{t}R_{\alpha}'(u)\,du\\
&\ll 1+\int_{1/\log(1/t)}^{1}\frac{\,du}{u}+o\left(\int_{t}^{1/\log(1/t)}\frac{\,du}{u}\right)=o\left(|\log t|\right),
\end{align*}
and
\begin{align*}
R_{\alpha}(t)-R_{\alpha}(2t)&=t\int_{2}^{1}R_{\alpha}'(tu)\,du\ll t\times o\left(\int_{1}^{2}\frac{1}{tu}\,du\right)=o(1).
\end{align*}
Moreover, if the series \eqref{seriesj} of $J_{\alpha}(1)$ is convergent, then the use of integration by parts for Riemann-Stieltjes integrals implies
\begin{align*}
R_{\alpha}(t)-\alpha^{-1}J_{\alpha}(1)=&\sum_{\ell\ge 1}\left(\frac{\alpha\ell t}{e^{\alpha\ell t}-1}-1\right)\frac{\widetilde{B}_{1}(\alpha\ell)}{\alpha\ell}\\
\ll & \sum_{1\le \ell\le t^{-1/2}}|\alpha\ell t|\frac{1}{\alpha\ell}+\left|\int_{t^{-1/2}}^{\infty}\left(\frac{\alpha x t}{e^{\alpha x t}-1}-1\right)\,d \left(\sum_{1\le \ell\le x}\frac{\widetilde{B}_{1}(\alpha\ell)}{\alpha\ell}\right)\right|\\
= & \sum_{1\le \ell\le t^{-1/2}}t+\left|\int_{t^{-1/2}}^{\infty}\left(\frac{\alpha x t}{e^{\alpha x t}-1}-1\right)\,d \left(\alpha^{-1}J_\alpha(1)+o(1)\right)\right|\\
\ll& t^{1/2}+o(1)\left(t^{1/2}+\int_{t^{-1/2}}^{\infty}\left|\frac{\,d}{\,dx}\frac{\alpha x t}{e^{\alpha x t}-1}\right|\,d x\right)=o(1),
\end{align*}
which completes the proof of the lemma.
\end{proof}

\appendix
\section{Numerical approximation for $\Lambda_{\alpha}$}\label{A}
In this appendix we investigate the  numerical approximation for $\Lambda_{\alpha}$. Denoting by
\begin{equation}\label{A1}
\Pi_{\alpha}=\prod_{\ell\ge 1}\left(1-\frac{\{\alpha\ell\}}{\alpha\ell}\right)e^{\frac{1}{2\alpha\ell}},
\end{equation}
then using \eqref{eqlm} we see that $\Lambda_{\alpha}=4\sqrt{3}(\pi e^{-\gamma})^{1/2\alpha}(\alpha/6)^{1/4\alpha}\Pi_{\alpha}$.
Taking a logarithm of \eqref{A1} we obtain
\begin{align}\label{A2}
\log \Pi_{\alpha}&=\sum_{\ell\ge 1}\left(\frac{1}{2\alpha \ell}-\log\left(1-\frac{\{\alpha\ell\}}{\alpha\ell}\right)\right)\nonumber\\
&=\bigg(\sum_{1\le \ell\le N}+\sum_{\ell>N}\bigg)\left(\frac{1}{2\alpha \ell}-\log\left(1-\frac{\{\alpha\ell\}}{\alpha\ell}\right)\right)\nonumber\\
&=:\Sigma_m(N)+\Sigma_e(N),
\end{align}
where $N>10$ is an integer will be chosen for give a good numerical approximation for $\Lambda_{\alpha}$.

\medskip

We now bound the error term $\Sigma_e(N)$. We rewritten the sum of $\Sigma_e(N)$ as
\begin{align}\label{A3}
\Sigma_{e}(N)=&-\sum_{\ell>N}\frac{\widetilde{B}_1(\alpha\ell)}{\alpha\ell}-\sum_{\ell>N}\left(\log\left(1-\frac{\{\alpha\ell\}}{\alpha\ell}\right)+\frac{\{\alpha\ell\}}{\alpha\ell}\right)\nonumber\\
=&-\Sigma_{1e}(N)+\Sigma_{e2}(N).
\end{align}
It is not difficult to give a bound for the second sum above that
\begin{align}\label{eqsge2}
0<\Sigma_{e2}(N)&< -\sum_{\ell>N}\left(\log\left(1-\frac{1}{\alpha\ell}\right)+\frac{1}{\alpha\ell}\right)\nonumber\\
&<-\int_{N}^{\infty}\left(\log\left(1-\frac{1}{\alpha x}\right)+\frac{1}{\alpha x}\right)\,dx<\frac{3\alpha N-2}{6\alpha N(\alpha N-1)}.
\end{align}
Using part integration to $\Sigma_{e1}(N)$ we have
\begin{align}\label{eqsge1}
\Sigma_{e1}(N)&=-\int_{N}^{\infty}\frac{1}{\alpha x}\,d S_{\alpha}(x)\nonumber\\
&=\frac{S_{\alpha}(N)}{\alpha N}-\frac{1}{\alpha}\int_{N}^{\infty}\frac{S_{\alpha}(x)}{x^2}\,dx.
\end{align}

We now focus on the approximation of $\Lambda_{\alpha}$ to a class of irrational numbers $\alpha$ in which the
partial quotients of the continued fraction expansion of $\alpha$ are bounded. In other world, $\alpha$ has the following continued fraction expansion
$$\alpha=[a_0; a_1, a_2,\ldots]=a_0+\cfrac{1}{a_1+\cfrac{1}{a_2+\cdots}}$$
with all $a_j\le A$ for some $A>0$. In this cases Ostrowski \cite[pp. 80--81]{MR3069389} proved that
\begin{align}\label{eqS}
|S_{\alpha}(x)|\le \frac{3}{2}A\log x,
\end{align}
for all $x>10$. Substituting \eqref{eqS} in \eqref{eqsge1} we find that
$$|\Sigma_{e1}(N)|\le \frac{3A\log N}{\alpha N}+\frac{3A}{2\alpha N}.$$
Combining \eqref{A2}--\eqref{eqsge2}, and above we obtain
\begin{align}\label{A7}
\left|\log \Pi_{\alpha}-\Sigma_m(N)\right|< \frac{3A}{\alpha N}\left(\log N+\frac{1}{2}\right)+\frac{3\alpha N-2}{6\alpha N(\alpha N-1)}.
\end{align}

We now give the numerical approximation for $\Lambda_{\sqrt{2}}$. Note that $\sqrt{2}=[1;2,2,2,\ldots]$, that is the partial quotients of the continued fraction expansion are bounded by $2$. Hence from \eqref{A7} we have
\begin{align*}
\left|\log \Pi_{\sqrt{2}}-\Sigma_m(N)\right|< \frac{3\sqrt{2}}{N}\left(\log N+\frac{1}{2}\right)+\frac{3N-\sqrt{2}}{6N(\sqrt{2} N-1)}.
\end{align*}
Taking $N=10^6$, then using {\bf Mathematica} we find that $$\log \Pi_{\sqrt{2}}=-0.127496+ 6.11\times 10^{-5}\theta,$$
for some $\theta\in(-1,1)$. Hence
$$\Lambda_{\sqrt{2}}=4\sqrt{3}(\pi e^{-\gamma})^{1/2\sqrt{2}}(\sqrt{2}/6)^{1/4\sqrt{2}}\Pi_{\sqrt{2}}\in(5.7731, 5.7739).$$



\bigskip
\noindent
{\sc Nian Hong Zhou\\
School of Mathematics and Statistics, Guangxi Normal University\\
No.1 Yanzhong Road, Yanshan District, Guilin, 541006\\
Guangxi, PR China}\newline
Email:~\href{mailto:nianhongzhou@outlook.com; nianhongzhou@gxnu.edu.cn}{\small nianhongzhou@outlook.com; nianhongzhou@gxnu.edu.cn}

\end{document}